\newtheorem{theorem}{Theorem}
\newtheorem{corollary}[theorem]{Corollary}
\newtheorem{proposition}[theorem]{Proposition}
\newenvironment{proof}[1][Proof]{\noindent\textbf{#1.} }{\ \rule{0.5em}{0.5em}}
\begin{document}

\title{$A$-Poisson structures }
\author{Basile Guy Richard BOSSOTO$^{(1)}$, Eug\`{e}ne OKASSA$^{(2)}$\\Universit\'{e} Marien NGOUABI, Facult\'{e} des Sciences,\\D\'{e}partement de Math\'{e}matiques\\B.P.69 - BRAZZAVILLE- (Congo)\\E-mail: $^{(1)}$bossotob@yahoo.fr, $^{(2)}$eugeneokassa@yahoo.fr}
\date{}
\maketitle

\begin{abstract}
Let $M$ be a paracompact differentiable manifold, $A$ a local algebra and
$M^{A}$ a manifold of infinitely near points on $M$ of kind $A$. We define the
notion of $A$-Poisson manifold on $M^{A}$. We show that when $M$ is a Poisson
manifold, then $M^{A}$ is an $A$-Poisson manifold. We also show that if
$(M,\Omega)$ is a symplectic manifold, the structure of $A$-Poisson manifold
on $M^{A}$ defined by $\Omega^{A}$ coincide with the prolongation on $M^{A}$
of the Poisson structure on $M$ defined by the symplectic form $\Omega$.

\end{abstract}

\textbf{Key words:} Near points manifold, local algebra, Poisson manifold,
symplectic manifold, $A$-Poisson manifold.

\textbf{Mathematics Subject Classification (2000):} 17D63 , 53D17, 53D05, 58A32.

\section{Introduction}

In what follows,$\ M$ denotes a paracompact differentiable manifold,
$C^{\infty}(M)$ the algebra of smooth functions on $M$, $A$ a local algebra
(in the sense of Andr\'{e} Weil) i.e a real commutative algebra with unit, of
finite dimension, and with an unique maximal ideal $\mathfrak{m}$ of
codimension $1$ over $%
\mathbb{R}
$. In this case, there exists an integer $h$ such that $\mathfrak{m}%
^{h+1}=(0)$ and $\mathfrak{m}^{h}\neq(0)$. The integer $h$ is the height of
$A$. Also we have $A=%
\mathbb{R}
\oplus\mathfrak{m}$.

For example the algebra of dual numbers
\[
\mathbb{D}=%
\mathbb{R}
\left[  T\right]  /(T^{2})
\]
is a local algebra with height $1$.

We recall that a near point of $x\in M$ of kind $A$ is a morphism of algebras
\[
\xi:C^{\infty}(M)\longrightarrow A
\]
such that
\[
\left[  \xi(f)-f(x)\right]  \in\mathfrak{m}%
\]
for any $f\in C^{\infty}(M)$. We denote $M_{x}^{A}$ the set of near points of
$x\in M$ of kind $A$ and
\[
M^{A}=\bigcup\limits_{x\in M}M_{x}^{A}%
\]
the manifold of infinitely near points on $M$ of kind $A$ \cite{wei}.

We have $%
\mathbb{R}
^{A}=A$, $M^{\mathbb{D}}=TM$ where $TM$ is the tangent bundle of $M$.

When the dimension of $M$ is $n$,\ then the dimension of $M^{A}$ is
$n\times\dim(A)$ \cite{wei}. Let $(U,\varphi)$ be a local chart with local
coordinates $(x_{1},x_{2},...,x_{n})$. The application
\[
U^{A}\longrightarrow A^{n},\xi\longmapsto(\xi(x_{1}),\xi(x_{2}),...,\xi
(x_{n})),
\]
is a bijection from $U^{A}$ to an open of $A^{n}$. Thus $M^{A}$ is an
$A$-manifold of dimension $n$.

The set, $C^{\infty}(M^{A},A)$, of smooth functions on $M^{A}$ with values in
$A$ is a commutative algebra with unit over $A$.

For any $f\in C^{\infty}(M)$, the application%
\[
f^{A}:M^{A}\longrightarrow A,\xi\longmapsto\xi(f),
\]
is smooth and the application%
\[
C^{\infty}(M)\longrightarrow C^{\infty}(M^{A},A),f\longmapsto f^{A},
\]
is a monomophism of algebras.

The following assertions are equivalent \cite{bos}:

\begin{enumerate}
\item $X$ is a derivation of $C^{\infty}(M^{A})$ i.e. $X$ is a vector field on
$M^{A}$;

\item $X:C^{\infty}(M)\longrightarrow C^{\infty}(M^{A},A)$ is a $%
\mathbb{R}
$-linear application such that, for any $f,g\in C^{\infty}(M)$,%
\[
X(fg)=X(f)\cdot g^{A}+f^{A}\cdot X(g)
\]
i.e. $X$ is a derivation from $C^{\infty}(M)$ to $C^{\infty}(M^{A},A)$ with
respect the module structure
\[
C^{\infty}(M^{A},A)\times C^{\infty}(M)\longrightarrow C^{\infty}%
(M^{A},A),(F,f)\longmapsto F\cdot f^{A}\text{.}%
\]

\end{enumerate}

Thus the set, $\mathfrak{X}(M^{A})$, of vector fields on $M^{A}$ considered as
derivations of $C^{\infty}(M)$ into $C^{\infty}(M^{A},A)$ is a module over
$C^{\infty}(M^{A},A)$.

When
\[
\theta:C^{\infty}(M)\longrightarrow C^{\infty}(M)
\]
is a vector field on $M$, then the application
\[
\theta^{A}:C^{\infty}(M)\longrightarrow C^{\infty}(M^{A},A),f\longmapsto
\left[  \theta(f)\right]  ^{A}\text{,}%
\]
is a vector field on $M^{A}$. We say that the vector field $\theta^{A}$ is the
prolongation to $M^{A}$ of the vector field $\theta$.

If $X$ is a vector field on $M^{A}$, considered as a derivation of $C^{\infty
}(M)$ into $C^{\infty}(M^{A},A)$, then there exists, \cite{bos}, an unique
derivation
\[
\widetilde{X}:C^{\infty}(M^{A},A)\longrightarrow C^{\infty}(M^{A},A)
\]
such that

\begin{enumerate}
\item $\widetilde{X}$ is $A$-linear;

\item $\widetilde{X}\left[  C^{\infty}(M^{A})\right]  \subset C^{\infty}%
(M^{A})$;

\item $\ \widetilde{X}(f^{A})=X(f)$ for any $f\in C^{\infty}(M)$.
\end{enumerate}

Let $(a_{\alpha})_{\alpha=1,...,r}$ be a basis of $A$ and $(a_{\alpha}^{\ast
})_{\alpha=1,...,r}$ be the dual basis.

If
\[
Y:C^{\infty}(M^{A},A)\longrightarrow C^{\infty}(M^{A},A)
\]
is an $A$-linear derivation such that
\[
Y(f^{A})=\ \widetilde{X}(f^{A})
\]
for any $f\in C^{\infty}(M)$, then
\[
Y\left[  C^{\infty}(M^{A})\right]  \subset C^{\infty}(M^{A})
\]
since
\[
Y(a_{\alpha}^{\ast}\circ f^{A})=\ \widetilde{X}(a_{\alpha}^{\ast}\circ
f^{A})\in C^{\infty}(M^{A})
\]
for any $\alpha=1,2,..,r$. Thus, \cite{bos}, $Y=\ \widetilde{X}$.

The application
\[
\left[  ,\right]  :\mathfrak{X}(M^{A})\times\mathfrak{X}(M^{A})\longrightarrow
\mathfrak{X}(M^{A}),(X,Y)\longmapsto\widetilde{X}\circ Y-\widetilde{Y}\circ
X\text{,}%
\]
is $A$-bilinear and defines a structure of $A$-Lie algebra on $\mathfrak{X}%
(M^{A})$ \cite{bos}.

If we denote $Der\left[  C^{\infty}(M^{A},A)\right]  $, the $C^{\infty}%
(M^{A},A)$-module of derivations of $C^{\infty}(M^{A},A)$, then the application%

\[
\mathfrak{X}(M^{A})\longrightarrow Der\left[  C^{\infty}(M^{A},A)\right]
,X\longmapsto\widetilde{X},
\]
is a morphism of $A$-Lie algebras \cite{bos}.

For any $p\in\mathbb{N}$,
\[
\Lambda^{p}(M^{A},A)=\mathcal{L}_{sks}^{p}\left[  \mathfrak{X}(M^{A}%
),C^{\infty}(M^{A},A)\right]
\]
denotes the $C^{\infty}(M^{A},A)$-module of skew-symmetric multilinear forms
of degree $p$ on $\mathfrak{X}(M^{A})$. We say that $\Lambda^{p}(M^{A},A)$ is
the $C^{\infty}(M^{A},A)$-module of differential $A$-forms of degree $p$ on
$M^{A}$. We have
\[
\Lambda^{0}(M^{A},A)=C^{\infty}(M^{A},A)\text{.}%
\]
We denote
\[
\Lambda(M^{A},A)=\bigoplus\limits_{p=0}^{n}\Lambda^{p}(M^{A},A)\text{.}%
\]

If $\omega$ is a differential form of degree $p$ on $M$, then there exists an
unique differential $A$-form of degree $p$ on $M^{A}$\ such that
\[
\omega^{A}(\theta_{1}^{A},\theta_{2}^{A},...,\theta_{p}^{A})=\left[
\omega(\theta_{1},\theta_{2},...,\theta_{p})\right]  ^{A}%
\]
for any vector fields $\theta_{1},\theta_{2},...,\theta_{p}\ \in\mathfrak{X}($
$M)$. \ We say that the differential $A$-form $\omega^{A}$ is the prolongation
to $M^{A}$ of the differential form $\omega$ \cite{mor}, \cite{ok3}.

When
\[
d:\Lambda(M)\longrightarrow\Lambda(M)
\]
is the exterior diffentiation operator, we denote
\[
d^{A}:\Lambda(M^{A},A)\longrightarrow\Lambda(M^{A},A)
\]
the cohomology operator associated to the representation
\[
\mathfrak{X}(M^{A})\longrightarrow Der\left[  C^{\infty}(M^{A},A)\right]
,X\longmapsto\widetilde{X}\text{.}%
\]
We recall that for $\eta\in\Lambda^{p}(M^{A},A)$, we have%
\begin{align*}
&  (d^{A}\eta)(X_{1},X_{2},...,X_{p+1})\\
&  =\overset{p+1}{\underset{i=1}{\sum}}(-1)^{i-1}\widetilde{X_{i}}\left[
\eta(X_{1},X_{2},...,\widehat{X_{i}},...,X_{p+1})\right] \\
&  +\underset{1\leq i<j\leq p+1}{\sum}(-1)^{i+j}\ \eta(\left[  X_{i}%
,X_{j}\right]  ,X_{1},...,\widehat{X_{i}},\ ...,\widehat{X_{j}},...,X_{p+1})
\end{align*}
for any vector fields $X_{1},X_{2},...,X_{p+1}$ on $M^{A}$, where
$\widehat{X_{i}}$ means that the term $X_{i}$ is omitted.

The application%
\[
d^{A}:\Lambda(M^{A},A)\longrightarrow\Lambda(M^{A},A)
\]
is $A$-linear and
\[
d^{A}(\omega^{A})=(d\omega)^{A}%
\]
for any $\omega\in\Lambda(M)$ \cite{bos}. It is obvious that if%
\[
d\omega=0\text{,}%
\]
then
\[
d^{A}(\omega^{A})=0\text{.}%
\]

\bigskip Let $(M,\omega)$ be a symplectic manifold. Then the manifold $M$ is a
Poisson manifold i.e. the algebra $C^{\infty}(M)$ carries a structure of
Poisson algebra. For any linear form
\[
\psi:A\longrightarrow%
\mathbb{R}
\text{,}%
\]
the differential form $\psi\circ\omega^{A}$ is not necessary a symplectic form
on $M^{A}$. That means that the prolongation $\omega^{A}$ does not always
induce a structure of Poisson on $M^{A}$. In effect, let $m$ be the maximal
ideal of a local algebra $A$,
\[
ann(m)=\left\{  a\in A/a\cdot x=0\text{ for any }x\in m\right\}
\]
and%
\[
\mu_{A}:A\times A\longrightarrow A,(a,b)\longmapsto a\cdot b,
\]
the multiplication on $A$. Then there exists a linear form $\psi
:A\longrightarrow%
\mathbb{R}
$ such that the bilinear symmetric form
\[
\psi\circ\mu_{A}:A\times A\longrightarrow%
\mathbb{R}
\]
is nondegenerated if and only if $\dim\left[  ann(m)\right]  =1$\cite{ok3}.

When $\left(  M,\omega\right)  $ is a symplectic manifold and $\psi\in
A^{\ast}$ a linear form on $A$, then the scalar $2$-form $\psi\circ\omega^{A}$
is a symplectic form on $M^{A}$ if and only if $\dim\left[  ann(m)\right]  =1$
and $\psi\left[  ann(m)\right]  \neq0$ : it is the case when $A=%
\mathbb{R}
\left[  T_{1},...,T_{s}\right]  /[T_{1}^{k_{1}},...,T_{s}^{k_{s}}]$. Thus,
when $\left(  M,\omega\right)  $ is a symplectic manifold, we cannot obtain a
Poisson structure on $M^{A}$ which comes from the prolongation of $\omega$
when $\dim\left[  \text{ann}(m)\right]  \neq1$. For example, it is the case
when $A=%
\mathbb{R}
\left[  T_{1},T_{2}\right]  /(T_{1},T_{2})^{2}$.

In this paper, we do not study the structures of $A$-manifolds but we study
the structures on $M^{A}$ as an $A$-manifold. When $M$ is a manifold, the
basic algebra of $M$ is $C^{\infty}(M)$. As $\mathfrak{X}(M^{A})$ is a
$C^{\infty}(M^{A},A)$-module, considered as the set of derivations of
$C^{\infty}(M)$ to $C^{\infty}(M^{A},A)$, and a Lie algebra over $A$, and as
$M^{A}$ is an $A$-manifold, that means that the basic algebra of $M^{A}$ is
$C^{\infty}(M^{A},A)$: thus the natural space for studying Poisson structures
on $M^{A}$ is $C^{\infty}(M^{A},A)$ but not $C^{\infty}(M^{A})$. When $\left(
M,\omega\right)  $ is a symplectic manifold, we will show \ that $\left(
M^{A},\omega^{A}\right)  $ is a symplectic $A$-manifold.

The main goal of this paper is to define the notion of $A$-Poisson structures
on $M^{A}$ and to show that if a manifold $M$ is a Poisson manifold, then
$M^{A}$ admits an $A$-Poisson structure.\ We also show that if $M$ is a
symplectic manifold, then $M^{A}$ admits an $A$-Poisson structure such that
this structure coincide with the structure of $A$-Poisson manifold on $M^{A}$
deduced by the structure of Poisson manifold on $M$ defined by the symplectic form.

\section{$A$-Poisson structures}

We recall that a Poisson structure on a differentiable manifold $M$ is due to
the existence of a bracket $\left\{  ,\right\}  $ on $C^{\infty}(M)$ such that
the pair $\left(  C^{\infty}(M),\left\{  ,\right\}  \right)  $ is a real Lie
algebra and%
\[
\left\{  f,g\cdot h\right\}  =\left\{  f,g\right\}  \cdot h+g\cdot\left\{
f,h\right\}
\]
for any $f,g,h\in C^{\infty}(M)$. In this case we say that $M$ is a Poisson
manifold and $C^{\infty}(M)$ is a Poisson algebra.

We will say that the $A$-algebra $C^{\infty}(M^{A},A)$ is a Poisson
$A$-algebra if there exists a bracket $\left\{  ,\right\}  $ on $C^{\infty
}(M^{A},A)$ such that the pair $\left(  C^{\infty}(M^{A},A),\left\{
,\right\}  \right)  $ is a Lie algebra over $A$ satisfaying
\[
\left\{  \varphi,\psi_{1}\cdot\psi_{2}\right\}  =\left\{  \varphi,\psi
_{1}\right\}  \cdot\psi_{2}+\psi_{1}\cdot\left\{  \varphi,\psi_{2}\right\}
\]
for any $\varphi,\psi_{1},\psi_{2}\in C^{\infty}(M^{A},A)$. When $C^{\infty
}(M^{A},A)$ is a Poisson $A$-algebra, we will say that the manifold $M^{A}$ is
a $A$-Poisson manifold or $M^{A}$ admits an $A$-Poisson structure.

\subsection{Structure of $A$-Poisson manifold on $M^{A}$ when $M$ is a Poisson
manifold}

In this part, $M$ is a Poisson manifold with bracket $\left\{  ,\right\}  $.
In this case, for any $f\in C^{\infty}(M)$, the application%
\[
ad(f):C^{\infty}(M)\longrightarrow C^{\infty}(M),g\longmapsto\left\{
f,g\right\}  ,
\]
is a vector field on $M$ and, for any $g\in C^{\infty}(M)$, we get
\[
ad(fg)=f\cdot ad(g)+g\cdot ad(f)\text{.}%
\]

For any $f\in C^{\infty}(M)$, let
\[
\left[  ad(f)\right]  ^{A}:C^{\infty}(M)\longrightarrow C^{\infty}%
(M^{A},A),g\longmapsto\left\{  f,g\right\}  ^{A},
\]
be the prolongation of the vector field $ad(f)$ and let
\[
\widetilde{\left[  ad(f)\right]  ^{A}}:C^{\infty}(M^{A},A)\longrightarrow
C^{\infty}(M^{A},A)
\]
be the unique $A$-linear derivation such that
\begin{align*}
\widetilde{\left[  ad(f)\right]  ^{A}}(g^{A})  &  =\left[  ad(f)\right]
^{A}(g)\\
&  =\left\{  f,g\right\}  ^{A}%
\end{align*}
for any $g\in C^{\infty}(M)\cite{bos}$.

\begin{proposition}
For any $\varphi\in C^{\infty}(M^{A},A)$, the application%
\[
\tau_{\varphi}:C^{\infty}(M)\longrightarrow C^{\infty}(M^{A},A),f\longmapsto
-\widetilde{[ad(f)]^{A}}(\varphi),
\]
is a vector field on $M^{A}$.
\end{proposition}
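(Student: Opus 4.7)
The plan is to verify the two conditions of the equivalence stated in the excerpt, namely that $\tau_\varphi$ is $\mathbb{R}$-linear and satisfies the Leibniz rule $\tau_\varphi(fg)=\tau_\varphi(f)\cdot g^A+f^A\cdot\tau_\varphi(g)$ with respect to the module structure on $C^\infty(M^A,A)$.

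For $\mathbb{R}$-linearity I would chain three linearities: (i) $\mathrm{ad}\colon C^\infty(M)\to\mathfrak{X}(M)$ is $\mathbb{R}$-linear because the Poisson bracket is $\mathbb{R}$-bilinear in the first slot; (ii) the prolongation $\theta\mapsto\theta^A$ of vector fields is $\mathbb{R}$-linear, which is immediate from its pointwise definition $\theta^A(g)=[\theta(g)]^A$; (iii) the assignment $X\mapsto\widetilde X$ is $A$-linear (hence $\mathbb{R}$-linear), by uniqueness of the extension recalled in the excerpt. Composing these with evaluation at $\varphi$ gives the $\mathbb{R}$-linearity of $\tau_\varphi$.

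For the Leibniz rule I would start from the well-known identity $\mathrm{ad}(fg)=f\cdot\mathrm{ad}(g)+g\cdot\mathrm{ad}(f)$ in $\mathfrak{X}(M)$, which follows directly from the Leibniz identity for the Poisson bracket. The key transport step is the identity $[f\cdot\theta]^A=f^A\cdot\theta^A$ in $\mathfrak{X}(M^A)$ (viewed as derivations $C^\infty(M)\to C^\infty(M^A,A)$): this is checked on generators $g\in C^\infty(M)$ using $(f\theta)^A(g)=[f\cdot\theta(g)]^A=f^A\cdot[\theta(g)]^A=f^A\cdot\theta^A(g)$. Applying the prolongation operation to $\mathrm{ad}(fg)$ therefore gives $[\mathrm{ad}(fg)]^A=f^A\cdot[\mathrm{ad}(g)]^A+g^A\cdot[\mathrm{ad}(f)]^A$.

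It then remains to pass to the tildes and evaluate at $\varphi$. For this I would invoke the formula $\widetilde{h\cdot X}=h\cdot\widetilde X$ for $h\in C^\infty(M^A,A)$ and $X\in\mathfrak{X}(M^A)$: indeed $h\cdot\widetilde X$ is an $A$-linear derivation of $C^\infty(M^A,A)$ agreeing with $h\cdot X$ on $\{g^A\mid g\in C^\infty(M)\}$, so the uniqueness clause recalled in the excerpt forces $\widetilde{h\cdot X}=h\cdot\widetilde X$. Putting this together yields
\[
\widetilde{[\mathrm{ad}(fg)]^A}(\varphi)=f^A\cdot\widetilde{[\mathrm{ad}(g)]^A}(\varphi)+g^A\cdot\widetilde{[\mathrm{ad}(f)]^A}(\varphi),
\]
and multiplying by $-1$ gives exactly $\tau_\varphi(fg)=f^A\cdot\tau_\varphi(g)+g^A\cdot\tau_\varphi(f)$. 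The only slightly delicate step is the identity $\widetilde{h\cdot X}=h\cdot\widetilde X$; everything else is a routine assembly of $\mathbb{R}$-linearity and the compatibility of prolongation with multiplication by functions.
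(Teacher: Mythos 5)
Your proof is correct and takes essentially the same route as the paper: check $\mathbb{R}$-linearity and then the Leibniz rule via the identity $ad(fg)=f\cdot ad(g)+g\cdot ad(f)$, transported through the prolongation and the tilde operation. The only difference is that you explicitly justify the key transport step $\widetilde{[f\cdot ad(g)]^{A}}=f^{A}\cdot\widetilde{[ad(g)]^{A}}$ (via $[f\cdot\theta]^{A}=f^{A}\cdot\theta^{A}$ and the uniqueness argument giving $\widetilde{h\cdot X}=h\cdot\widetilde{X}$), which the paper asserts in a single line without comment.
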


\begin{proof}
It is obvious that $\tau_{\varphi}$ is linear. For any $f,g\in C^{\infty}(M)$,
we have%
\begin{align*}
\tau_{\varphi}(fg)  &  =-\widetilde{[ad(fg)]^{A}}(\varphi)\\
&  =-\widetilde{[f\cdot ad(g)+g\cdot ad(f)]^{A}}(\varphi)\\
&  =f^{A}\cdot\left(  -\widetilde{[ad(g)]^{A}}\right)  (\varphi)+g^{A}%
\cdot\left(  -\widetilde{[ad(f)]^{A}}\right)  (\varphi)\\
&  =\left(  -\widetilde{[ad(f)]^{A}}\right)  (\varphi)\cdot g^{A}+f^{A}%
\cdot\left(  -\widetilde{[ad(f)]^{A}}\right)  (\varphi)\\
&  =\tau_{\varphi}(f)\cdot g^{A}+f^{A}\cdot\tau_{\varphi}(g)\text{.}%
\end{align*}
That ends the proof.
\end{proof}

For any $\varphi\in C^{\infty}(M^{A},A)$, we denote
\[
\widetilde{\tau_{\varphi}}:C^{\infty}(M^{A},A)\longrightarrow C^{\infty}%
(M^{A},A)
\]
the unique $A$-linear derivation such that%
\[
\widetilde{\tau_{\varphi}}(f^{A})=\tau_{\varphi}(f)
\]
for any $f\in C^{\infty}(M)$.

For $f\in C^{\infty}(M)$, we verify that%
\[
\widetilde{\tau_{f^{A}}}=\widetilde{\left[  ad(f)\right]  ^{A}}\text{.}%
\]

\begin{proposition}
For $\varphi,\psi\in C^{\infty}(M^{A},A)$ and for $a\in A$, we have
\begin{align*}
\widetilde{\tau_{\varphi+\psi}}  &  =\widetilde{\tau_{\varphi}}+\widetilde
{\tau_{\psi}}\text{;}\\
\widetilde{\tau_{a\cdot\varphi}}  &  =a\cdot\widetilde{\tau_{\varphi}}%
\text{;}\\
\widetilde{\tau_{\varphi\cdot\psi}}  &  =\varphi\cdot\widetilde{\tau_{\psi}%
}+\psi\cdot\widetilde{\tau_{\varphi}}\text{.}%
\end{align*}

\end{proposition}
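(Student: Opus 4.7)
The overall plan is to exploit the uniqueness statement recalled in the introduction: an $A$-linear derivation $Y:C^{\infty}(M^{A},A)\longrightarrow C^{\infty}(M^{A},A)$ is completely determined by its restriction to elements of the form $f^{A}$ with $f\in C^{\infty}(M)$. So for each of the three identities, I would check two things: (i) both sides are $A$-linear derivations of $C^{\infty}(M^{A},A)$, and (ii) they agree on every $f^{A}$. The conclusion then follows at once from the uniqueness of the tilde-extension.

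For the additivity $\widetilde{\tau_{\varphi+\psi}}=\widetilde{\tau_{\varphi}}+\widetilde{\tau_{\psi}}$ and the $A$-homogeneity $\widetilde{\tau_{a\cdot\varphi}}=a\cdot\widetilde{\tau_{\varphi}}$, the verification on $f^{A}$ is immediate from the $A$-linearity of $\widetilde{[ad(f)]^{A}}$ in its argument: one computes
\[
\tau_{\varphi+\psi}(f)=-\widetilde{[ad(f)]^{A}}(\varphi+\psi)=\tau_{\varphi}(f)+\tau_{\psi}(f),
\]
and similarly $\tau_{a\cdot\varphi}(f)=a\cdot\tau_{\varphi}(f)$. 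On the other side, sums and $A$-scalar multiples of $A$-linear derivations are again $A$-linear derivations, so the uniqueness of the tilde-extension applied to both $\widetilde{\tau_{\varphi+\psi}}$ and $\widetilde{\tau_{\varphi}}+\widetilde{\tau_{\psi}}$ (respectively to $\widetilde{\tau_{a\cdot\varphi}}$ and $a\cdot\widetilde{\tau_{\varphi}}$) concludes.

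For the Leibniz identity $\widetilde{\tau_{\varphi\cdot\psi}}=\varphi\cdot\widetilde{\tau_{\psi}}+\psi\cdot\widetilde{\tau_{\varphi}}$, I would first check that the right-hand side $D:=\varphi\cdot\widetilde{\tau_{\psi}}+\psi\cdot\widetilde{\tau_{\varphi}}$ is an $A$-linear derivation of $C^{\infty}(M^{A},A)$: $A$-linearity is clear since multiplication by elements of $C^{\infty}(M^{A},A)$ commutes with the $A$-action, and the Leibniz rule is a routine expansion
\[
D(FG)=\varphi\widetilde{\tau_{\psi}}(F)\,G+\varphi F\,\widetilde{\tau_{\psi}}(G)+\psi\widetilde{\tau_{\varphi}}(F)\,G+\psi F\,\widetilde{\tau_{\varphi}}(G)=D(F)\,G+F\,D(G).
\]
Then evaluating on $f^{A}$ and using the Leibniz rule of $\widetilde{[ad(f)]^{A}}$ on $\varphi\cdot\psi$ gives
\[
\tau_{\varphi\cdot\psi}(f)=-\widetilde{[ad(f)]^{A}}(\varphi\cdot\psi)=\tau_{\varphi}(f)\cdot\psi+\varphi\cdot\tau_{\psi}(f)=D(f^{A}),
\]
so $\widetilde{\tau_{\varphi\cdot\psi}}$ and $D$ are two $A$-linear derivations agreeing on the image of $f\mapsto f^{A}$, hence are equal.

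The only subtle point — and it really is the main step in (iii) — is checking that the expression $\varphi\cdot\widetilde{\tau_{\psi}}+\psi\cdot\widetilde{\tau_{\varphi}}$ is genuinely an $A$-linear derivation, so that the uniqueness theorem is applicable. Once that bookkeeping is done, everything reduces to the Leibniz rule and $A$-linearity of the single derivation $\widetilde{[ad(f)]^{A}}$ established in the previous proposition.
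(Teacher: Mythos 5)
Your proposal is correct and follows essentially the same route as the paper: verify that both sides of each identity are $A$-linear derivations of $C^{\infty}(M^{A},A)$, check that they agree on all elements of the form $f^{A}$ using the $A$-linearity and Leibniz rule of $\widetilde{[ad(f)]^{A}}$, and conclude by the uniqueness of the tilde-extension recalled in the introduction. The paper's proof carries out exactly these computations, so no further comparison is needed.
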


\begin{proof}
For $\varphi,\psi\in C^{\infty}(M^{A},A)$, $\widetilde{\tau_{\varphi}%
}+\widetilde{\tau_{\psi}}$ is an $A$-linear derivation. For any $f\in
C^{\infty}(M)$, we get%
\begin{align*}
(\widetilde{\tau_{\varphi}}+\widetilde{\tau_{\psi}})(f^{A})  &  =(\widetilde
{\tau_{\varphi}})(f^{A})+(\widetilde{\tau_{\psi}})(f^{A})\\
&  =\left(  -\widetilde{[ad(f)]^{A}}\right)  (\varphi)+\left(  -\widetilde
{[ad(f)]^{A}}\right)  (\psi)\\
&  =\left(  -\widetilde{[ad(f)]^{A}}\right)  (\varphi+\psi)\\
&  =(\widetilde{\tau_{\varphi+\psi}})(f^{A})\text{.}%
\end{align*}
We deduce that
\[
\widetilde{\tau_{\varphi+\psi}}=\widetilde{\tau_{\varphi}}+\widetilde
{\tau_{\psi}}\text{.}%
\]

For $\varphi\in C^{\infty}(M^{A},A)$, $a\in A$ and $f\in C^{\infty}(M)$, we
have%
\begin{align*}
(a\cdot\widetilde{\tau_{\varphi}})(f^{A})  &  =a\cdot(\widetilde{\tau
_{\varphi}})(f^{A})\\
&  =a\cdot\left(  -\widetilde{[ad(f)]^{A}}\right)  (\varphi)\\
&  =\left(  -\widetilde{[ad(f)]^{A}}\right)  (a\cdot\varphi)\\
&  =\left(  \widetilde{\tau_{a\cdot\varphi}}\right)  (f^{A})\text{.}%
\end{align*}
We deduce that%
\[
\widetilde{\tau_{a\cdot\varphi}}=a\cdot\widetilde{\tau_{\varphi}}\text{.}%
\]
\ \ \ \ 

For $\varphi,\psi\in C^{\infty}(M^{A},A)$, $\varphi\cdot\widetilde{\tau_{\psi
}}+\psi\cdot\widetilde{\tau_{\varphi}}$ is an $A$-linear derivation. For any
$f\in C^{\infty}(M)$, we get
\begin{align*}
\left[  \varphi\cdot\widetilde{\tau_{\psi}}+\psi\cdot\widetilde{\tau_{\varphi
}}\right]  (f^{A})  &  =\varphi\cdot\widetilde{\tau_{\psi}}(f^{A})+\psi
\cdot\widetilde{\tau_{\varphi}}(f^{A})\\
&  =\varphi\cdot\left(  -\widetilde{[ad(f)]^{A}}\right)  (\psi)+\psi
\cdot\left(  -\widetilde{[ad(f)]^{A}}\right)  (\varphi)\\
&  =\left(  -\widetilde{[ad(f)]^{A}}\right)  (\varphi\cdot\psi)\\
&  =\widetilde{\tau_{\varphi\cdot\psi}}(f^{A})\text{.}%
\end{align*}
We deduce that
\[
\widetilde{\tau_{\varphi\cdot\psi}}=\varphi\cdot\widetilde{\tau_{\psi}}%
+\psi\cdot\widetilde{\tau_{\varphi}}\text{.}%
\]
That ends the proof.
\end{proof}

For any $\varphi,\psi\in C^{\infty}(M^{A},A)$, we let
\[
\left\{  \varphi,\psi\right\}  _{A}=\widetilde{\tau_{\varphi}}(\psi)\text{.}%
\]
In what follows, we will show that this bracket defines a structure of Poisson
$A$-algebra on $C^{\infty}(M^{A},A)$.

\begin{proposition}
The application
\[
\left\{  ,\right\}  _{A}:C^{\infty}(M^{A},A)\times C^{\infty}(M^{A}%
,A)\longrightarrow C^{\infty}(M^{A},A),(\varphi,\psi)\longmapsto\left\{
\varphi,\psi\right\}  _{A}\text{,}%
\]
is $A$-bilinear and skew-symmetric.
\end{proposition}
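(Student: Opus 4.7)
The plan is to dispatch $A$-bilinearity by a direct appeal to the identities of the preceding proposition, and then to derive skew-symmetry from the skew-symmetry of the Poisson bracket on $M$ via a two-step extension argument.

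For $A$-bilinearity, additivity and $A$-homogeneity in the first slot are immediate from the identities
\[
\widetilde{\tau_{\varphi+\psi}}=\widetilde{\tau_{\varphi}}+\widetilde{\tau_{\psi}},\qquad \widetilde{\tau_{a\cdot\varphi}}=a\cdot\widetilde{\tau_{\varphi}},
\]
established in the previous proposition, upon evaluating at the second argument; $A$-linearity in the second slot is built into the very definition of $\widetilde{\tau_{\varphi}}$ as an $A$-linear derivation of $C^{\infty}(M^{A},A)$. So this part is essentially bookkeeping.

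For skew-symmetry, I would introduce the symmetric $A$-bilinear form
\[
B(\varphi,\psi):=\widetilde{\tau_{\varphi}}(\psi)+\widetilde{\tau_{\psi}}(\varphi)
\]
and show that $B\equiv 0$. First, check it on prolongations: since $\widetilde{\tau_{f^{A}}}=\widetilde{[ad(f)]^{A}}$, one has $B(f^{A},g^{A})=\{f,g\}^{A}+\{g,f\}^{A}=0$ by the skew-symmetry of the Poisson bracket on $M$. Second, fix $f\in C^{\infty}(M)$ and observe that, combining the Leibniz rule satisfied by the derivation $\widetilde{\tau_{f^{A}}}$ with the identity $\widetilde{\tau_{\varphi\cdot\psi}}=\varphi\cdot\widetilde{\tau_{\psi}}+\psi\cdot\widetilde{\tau_{\varphi}}$ of the preceding proposition, a short computation shows that $\psi\mapsto B(f^{A},\psi)$ is itself an $A$-linear derivation of $C^{\infty}(M^{A},A)$. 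Since it vanishes on every $g^{A}$, the uniqueness principle recalled in the introduction — an $A$-linear derivation of $C^{\infty}(M^{A},A)$ is determined by its values on the image of $f\longmapsto f^{A}$ — forces $B(f^{A},\,\cdot\,)\equiv 0$.

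A symmetric argument in the first slot then closes the proof: for a fixed $\psi\in C^{\infty}(M^{A},A)$, the same Leibniz formula shows that $\varphi\mapsto B(\varphi,\psi)$ is an $A$-linear derivation, now known from the previous step to vanish on every $f^{A}$, hence identically zero. The main technical point is precisely this propagation step: one must recognize that both slots of $B$ satisfy a Leibniz-type rule so as to invoke the uniqueness theorem for $A$-linear derivations and bootstrap from the special pairs $(f^{A},g^{A})$ to arbitrary elements of $C^{\infty}(M^{A},A)$.
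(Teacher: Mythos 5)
Your proof is correct, and its core mechanism --- introducing the symmetric form $B(\varphi,\psi)=\widetilde{\tau_{\varphi}}(\psi)+\widetilde{\tau_{\psi}}(\varphi)$, checking that it is an $A$-linear derivation in each slot, and killing it via the uniqueness principle for $A$-linear derivations --- is exactly the paper's (the paper calls your $B(\varphi,\cdot)$ the map $H_{\varphi}$). The one genuine difference is that your two-stage bootstrap from the pairs $(f^{A},g^{A})$ is longer than necessary: by the very definition of $\tau_{\varphi}$ one has, for \emph{arbitrary} $\varphi\in C^{\infty}(M^{A},A)$,
\[
B(\varphi,f^{A})=\widetilde{\tau_{\varphi}}(f^{A})+\widetilde{\tau_{f^{A}}}(\varphi)=-\widetilde{[ad(f)]^{A}}(\varphi)+\widetilde{[ad(f)]^{A}}(\varphi)=0,
\]
so a single application of the uniqueness principle, to $\psi\mapsto B(\varphi,\psi)$ with $\varphi$ fixed but arbitrary, already gives $B\equiv 0$; this is what the paper does. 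Your first stage --- the computation $B(f^{A},g^{A})=\{f,g\}^{A}+\{g,f\}^{A}=0$ --- is therefore redundant, and it is worth noticing what that redundancy reveals: the skew-symmetry of $\{\,,\,\}_{A}$ need not be traced back to the skew-symmetry of the bracket on $M$, since it is already built into the minus sign in $\tau_{\varphi}(f)=-\widetilde{[ad(f)]^{A}}(\varphi)$ together with the identity $\widetilde{\tau_{f^{A}}}=\widetilde{[ad(f)]^{A}}$. Your treatment of $A$-bilinearity matches the paper's (which simply declares it obvious), and your propagation steps are sound --- each slot of $B$ is indeed an $A$-linear derivation by the Leibniz identities of the preceding proposition --- so the extra stage costs nothing in correctness, only in economy.
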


\begin{proof}
It is obvious that this application is $A$-bilinear. For any $\varphi\in
C^{\infty}(M^{A},A)$, we verify that the application%
\[
H_{\varphi}:C^{\infty}(M^{A},A)\longrightarrow C^{\infty}(M^{A},A),\psi
\longmapsto\widetilde{\tau_{\varphi}}(\psi)+\widetilde{\tau_{\psi}}(\varphi)
\]
is an $A$-linear derivation. The application
\[
\sigma_{\varphi}:C^{\infty}(M)\longrightarrow C^{\infty}(M^{A},A),f\longmapsto
\widetilde{\tau_{\varphi}}(f^{A})+\widetilde{\tau_{f^{A}}}(\varphi),
\]
is a vector field on $M^{A}$ considered as a derivation of $C^{\infty}(M)$
into $C^{\infty}(M^{A},A)$. As for $f\in C^{\infty}(M)$, we have%
\begin{align*}
H_{\varphi}(f^{A})  &  =\widetilde{\tau_{\varphi}}(f^{A})+\widetilde
{\tau_{f^{A}}}(\varphi)\\
&  =\left(  \widetilde{\sigma_{\varphi}}\right)  (f^{A})\text{.}%
\end{align*}
We deduce, $\cite{bos}$, that
\[
H_{\varphi}=\widetilde{\sigma_{\varphi}}\text{.}%
\]

On the other hand, we have%
\begin{align*}
\left(  \widetilde{\sigma_{\varphi}}\right)  (f^{A})  &  =\left(
\sigma_{\varphi}\right)  (f)\\
&  =\widetilde{\tau_{\varphi}}(f^{A})+\widetilde{\tau_{f^{A}}}(\varphi)\\
&  =\left(  -\widetilde{[ad(f)]^{A}}\right)  (\varphi)+\left(  \widetilde
{[ad(f)]^{A}}\right)  (\varphi)\\
&  =0
\end{align*}
for any $f\in C^{\infty}(M)$. Thus we conclude that $\widetilde{\sigma
_{\varphi}}=0$ i.e. $H_{\varphi}=0$. For any $\psi\in C^{\infty}(M^{A},A)$, we
get%
\[
H_{\varphi}(\psi)=0
\]
i.e.%
\[
\widetilde{\tau_{\varphi}}(\psi)+\widetilde{\tau_{\psi}}(\varphi)=0\text{.}%
\]
Thus
\[
\left\{  \varphi,\psi\right\}  _{A}=-\left\{  \psi,\varphi\right\}
_{A}\text{.}%
\]
As the characteristic is different of $2$, therefore%
\[
\left\{  \varphi,\varphi\right\}  _{A}=0
\]
for any $\varphi\in C^{\infty}(M^{A},A)$.
\end{proof}

\begin{proposition}
For any $\varphi,\psi_{1},\psi_{2}\in C^{\infty}(M^{A},A)$, then%
\[
\left\{  \varphi,\psi_{1}\cdot\psi_{2}\right\}  _{A}=\left\{  \varphi,\psi
_{1}\right\}  _{A}\cdot\psi_{2}+\psi_{1}\cdot\left\{  \varphi,\psi
_{2}\right\}  _{A}\text{.}%
\]
.
\end{proposition}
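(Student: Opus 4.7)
The plan is to unpack the definition of the bracket and then invoke the derivation property of $\widetilde{\tau_{\varphi}}$. By definition, for any $\chi\in C^{\infty}(M^{A},A)$ we have $\{\varphi,\chi\}_{A}=\widetilde{\tau_{\varphi}}(\chi)$, so the left-hand side becomes $\widetilde{\tau_{\varphi}}(\psi_{1}\cdot\psi_{2})$.

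Next, I would recall that $\widetilde{\tau_{\varphi}}$ was constructed (just before the previous proposition) as the unique $A$-linear derivation of $C^{\infty}(M^{A},A)$ extending the vector field $\tau_{\varphi}\in\mathfrak{X}(M^{A})$; in particular it satisfies the Leibniz rule on the full algebra $C^{\infty}(M^{A},A)$. Applying this Leibniz rule to the product $\psi_{1}\cdot\psi_{2}$ gives
\[
\widetilde{\tau_{\varphi}}(\psi_{1}\cdot\psi_{2})=\widetilde{\tau_{\varphi}}(\psi_{1})\cdot\psi_{2}+\psi_{1}\cdot\widetilde{\tau_{\varphi}}(\psi_{2}),
\]
and re-encoding each term via the definition of $\{,\}_{A}$ yields exactly the claimed identity.

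There is really no obstacle here: the only substantive input is the fact that $\widetilde{\tau_{\varphi}}$ is a genuine derivation of the whole $A$-algebra $C^{\infty}(M^{A},A)$ (not merely on the subalgebra of prolongations $f^{A}$), and this was established earlier in the paper. Accordingly the proof is a two-line computation; no induction, no use of skew-symmetry, and no Jacobi-type manipulation is needed at this step.
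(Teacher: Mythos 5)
Your proposal is correct and follows exactly the paper's own argument: both unpack $\left\{ \varphi,\psi_{1}\cdot\psi_{2}\right\}_{A}=\widetilde{\tau_{\varphi}}(\psi_{1}\cdot\psi_{2})$ and then apply the Leibniz rule, using that $\widetilde{\tau_{\varphi}}$ is an $A$-linear derivation of all of $C^{\infty}(M^{A},A)$. Your remark that the only substantive input is the derivation property on the full algebra, not just on prolongations $f^{A}$, matches the paper's reasoning precisely.
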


\begin{proof}
For any $\varphi\in C^{\infty}(M^{A},A)$, as
\[
\widetilde{\tau_{\varphi}}:C^{\infty}(M^{A},A)\longrightarrow C^{\infty}%
(M^{A},A)
\]
is an $A$- linear derivation, then we have%
\begin{align*}
\ \left\{  \varphi,\psi_{1}\cdot\psi_{2}\right\}  _{A}  &  =\widetilde
{\tau_{\varphi}}(\psi_{1}\cdot\psi_{2})\\
&  =\widetilde{\tau_{\varphi}}(\psi_{1})\cdot\psi_{2}+\psi_{1}\cdot
\widetilde{\tau_{\varphi}}(\psi_{2})\\
&  =\left\{  \varphi,\psi_{1}\right\}  _{A}\cdot\psi_{2}+\psi_{1}\cdot\left\{
\varphi,\psi_{2}\right\}  _{A}\text{.}%
\end{align*}
That ends the proof.
\end{proof}

\begin{proposition}
For $f\in C^{\infty}(M)$ and $\varphi\in C^{\infty}(M^{A},A)$ , the
application%
\[
H_{(f^{A},\varphi)}:C^{\infty}(M^{A},A)\longrightarrow C^{\infty}%
(M^{A},A)\text{,}%
\]
defined by
\[
H_{(f^{A},\varphi)}(\psi)=\widetilde{\tau_{\varphi}}\left[  \widetilde
{\tau_{f^{A}}}(\psi)\right]  -\widetilde{\tau_{\psi}}\left[  \widetilde
{\tau_{f^{A}}}(\varphi)\right]  -\widetilde{\tau_{f^{A}}}\left[
\widetilde{\tau_{\varphi}}(\psi)\right]
\]
for any $\psi\in C^{\infty}(M^{A},A)$, is an $A$-linear derivation which is
$zero$.
\end{proposition}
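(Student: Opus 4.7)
The plan is to establish the three assertions in order: $A$-linearity, the Leibniz rule, and vanishing. The first two are structural, while the vanishing is really a special case of the Jacobi identity and will reduce, via a uniqueness argument, to the Jacobi identity for the Poisson bracket on $M$.

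For $A$-linearity, each of the three summands defining $H_{(f^{A},\varphi)}(\psi)$ is $A$-linear in $\psi$: the first and third by $A$-linearity of $\widetilde{\tau_{f^{A}}}$ and $\widetilde{\tau_{\varphi}}$, and the middle by the previous proposition's identity $\widetilde{\tau_{a\cdot\psi}} = a\cdot\widetilde{\tau_{\psi}}$.

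For the Leibniz rule I would expand $H_{(f^{A},\varphi)}(\psi_{1}\cdot\psi_{2})$ summand by summand. In the first and third summands one applies Leibniz first to $\widetilde{\tau_{f^{A}}}$ or $\widetilde{\tau_{\varphi}}$ acting on the product and then again to the outer derivation; in the middle summand one invokes the previous proposition's formula $\widetilde{\tau_{\psi_{1}\cdot\psi_{2}}} = \psi_{1}\cdot\widetilde{\tau_{\psi_{2}}} + \psi_{2}\cdot\widetilde{\tau_{\psi_{1}}}$. The diagonal contributions regroup into exactly $H_{(f^{A},\varphi)}(\psi_{1})\cdot\psi_{2} + \psi_{1}\cdot H_{(f^{A},\varphi)}(\psi_{2})$, while the cross terms $\widetilde{\tau_{f^{A}}}(\psi_{i})\cdot\widetilde{\tau_{\varphi}}(\psi_{j})$ arise with opposite signs from the first and third summands and cancel in pairs. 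I expect this bookkeeping to be the most tedious part; it is routine but must be tracked carefully.

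For vanishing, I would invoke the uniqueness principle recalled in the introduction: an $A$-linear derivation of $C^{\infty}(M^{A},A)$ is fully determined by its values on $\{g^{A} : g\in C^{\infty}(M)\}$. Hence it suffices to show $H_{(f^{A},\varphi)}(g^{A}) = 0$ for every $g\in C^{\infty}(M)$. Using $\widetilde{\tau_{f^{A}}} = \widetilde{[ad(f)]^{A}}$ and $\widetilde{\tau_{\varphi}}(h^{A}) = -\widetilde{[ad(h)]^{A}}(\varphi)$ for any $h\in C^{\infty}(M)$, the three terms of $H_{(f^{A},\varphi)}(g^{A})$ rewrite as compositions of the operators $\widetilde{[ad(f)]^{A}}$, $\widetilde{[ad(g)]^{A}}$ and $\widetilde{[ad(\{f,g\})]^{A}}$ applied to $\varphi$. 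The Jacobi identity for the Poisson bracket on $M$ gives $[ad(f),ad(g)] = ad(\{f,g\})$; prolonging to $M^{A}$ and using that $X\longmapsto\widetilde{X}$ is a morphism of $A$-Lie algebras yields the operator identity $\widetilde{[ad(\{f,g\})]^{A}} = \widetilde{[ad(f)]^{A}}\circ\widetilde{[ad(g)]^{A}} - \widetilde{[ad(g)]^{A}}\circ\widetilde{[ad(f)]^{A}}$, and substituting collapses the three terms to zero.
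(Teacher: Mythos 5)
Your proposal is correct and follows essentially the same route as the paper: the same verification of $A$-linearity and the Leibniz rule (with the cross terms cancelling), followed by evaluation on the $g^{A}$ and reduction, via $[ad(f),ad(g)]=ad(\{f,g\})$ and the $A$-Lie algebra morphism $X\longmapsto\widetilde{X}$, to the identity $\bigl[\widetilde{[ad(f)]^{A}},\widetilde{[ad(g)]^{A}}\bigr]=\widetilde{[ad\{f,g\}]^{A}}$. The only difference is cosmetic: the paper first packages $g\longmapsto H_{(f^{A},\varphi)}(g^{A})$ as a vector field $\sigma_{(f^{A},\varphi)}$ and identifies $H_{(f^{A},\varphi)}=\widetilde{\sigma_{(f^{A},\varphi)}}$ before concluding, whereas you apply the uniqueness principle directly with the zero vector field, which is a legitimate shortcut.
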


\begin{proof}
In fact for $a\in A$, we have
\begin{align*}
&  H_{(f^{A},\varphi)}(a\cdot\psi)\\
&  =\widetilde{\tau_{\varphi}}\left[  \widetilde{\tau_{f^{A}}}(a\cdot
\psi)\right]  -\widetilde{\tau_{a\cdot\psi}}\left[  \widetilde{\tau_{f^{A}}%
}(\varphi)\right]  -\widetilde{\tau_{f^{A}}}\left[  \widetilde{\tau_{\varphi}%
}(a\cdot\psi)\right] \\
&  =\widetilde{\tau_{\varphi}}\left[  a\cdot\widetilde{\tau_{f^{A}}}%
(\psi)\right]  -(a\cdot\widetilde{\tau_{\psi}})\left[  \widetilde{\tau_{f^{A}%
}}(\varphi)\right]  -\widetilde{\tau_{f^{A}}}\left[  a\cdot\widetilde
{\tau_{\varphi}}(\psi)\right] \\
&  =a\cdot\left(  \widetilde{\tau_{\varphi}}\left[  \widetilde{\tau_{f^{A}}%
}(\psi)\right]  -\widetilde{\tau_{\psi}}\left[  \widetilde{\tau_{f^{A}}%
}(\varphi)\right]  -\widetilde{\tau_{f^{A}}}\left[  \widetilde{\tau_{\varphi}%
}(\psi)\right]  \right) \\
&  =a\cdot H_{(f^{A},\varphi)}(\psi)\text{.}%
\end{align*}
We also have, for $\psi_{1},\psi_{2}\in C^{\infty}(M^{A},A)$,
\begin{align*}
&  H_{(f^{A},\varphi)}(\psi_{1}+\psi_{2})\\
&  =\widetilde{\tau_{\varphi}}\left[  \widetilde{\tau_{f^{A}}}(\psi_{1}%
+\psi_{2})\right]  -\widetilde{\tau_{\psi_{1}+\psi_{2}}}\left[  \widetilde
{\tau_{f^{A}}}(\varphi)\right]  -\widetilde{\tau_{f^{A}}}\left[
\widetilde{\tau_{\varphi}}(\psi_{1}+\psi_{2})\right] \\
&  =\widetilde{\tau_{\varphi}}\left[  \widetilde{\tau_{f^{A}}}(\psi
_{1})+\widetilde{\tau_{f^{A}}}(\psi_{2})\right]  -\widetilde{\tau_{\psi_{1}}%
}\left[  \widetilde{\tau_{f^{A}}}(\varphi)\right]  -\widetilde{\tau_{\psi_{2}%
}}\left[  \widetilde{\tau_{f^{A}}}(\varphi)\right] \\
&  -\widetilde{\tau_{f^{A}}}\left[  \widetilde{\tau_{\varphi}}(\psi
_{1})\right]  -\widetilde{\tau_{f^{A}}}\left[  \widetilde{\tau_{\varphi}}%
(\psi_{2})\right] \\
&  =\ \widetilde{\tau_{\varphi}}\left[  \widetilde{\tau_{f^{A}}}(\psi
_{1})\right]  -\widetilde{\tau_{\psi_{1}}}\left[  \widetilde{\tau_{f^{A}}%
}(\varphi)\right]  -\widetilde{\tau_{f^{A}}}\left[  \widetilde{\tau_{\varphi}%
}(\psi_{1})\right] \\
&  +\widetilde{\tau_{\varphi}}\left[  \widetilde{\tau_{f^{A}}}(\psi
_{2})\right]  -\widetilde{\tau_{\psi_{2}}}\left[  \widetilde{\tau_{f^{A}}%
}(\varphi)\right]  -\widetilde{\tau_{f^{A}}}\left[  \widetilde{\tau_{\varphi}%
}(\psi_{2})\right] \\
&  =H_{(f^{A},\varphi)}(\psi_{1})+H_{(f^{A},\varphi)}(\psi_{2})
\end{align*}
and
\begin{align*}
&  H_{(f^{A},\varphi)}(\psi_{1}\cdot\psi_{2})\\
&  =\widetilde{\tau_{\varphi}}\left[  \widetilde{\tau_{f^{A}}}(\psi_{1}%
\cdot\psi_{2})\right]  -\widetilde{\tau_{\psi_{1}\cdot\psi_{2}}}\left[
\widetilde{\tau_{f^{A}}}(\varphi)\right]  -\widetilde{\tau_{f^{A}}}\left[
\widetilde{\tau_{\varphi}}(\psi_{1}\cdot\psi_{2})\right] \\
&  =\widetilde{\tau_{\varphi}}\left[  \widetilde{\tau_{f^{A}}}(\psi_{1}%
)\cdot\psi_{2}+\psi_{1}\cdot\widetilde{\tau_{f^{A}}}(\psi_{2})\right]
-\left[  \psi_{2}\cdot\widetilde{\tau_{\psi_{1}}}+\psi_{1}\cdot\widetilde
{\tau_{\psi_{2}}}\right]  \left[  \widetilde{\tau_{f^{A}}}(\varphi)\right] \\
&  -\widetilde{\tau_{f^{A}}}\left[  \widetilde{\tau_{\varphi}}(\psi_{1}%
)\cdot\psi_{2}+\psi_{1}\cdot\widetilde{\tau_{\varphi}}(\psi_{2})\right] \\
&  =\widetilde{\tau_{\varphi}}\left[  \widetilde{\tau_{f^{A}}}(\psi_{1}%
)\cdot\psi_{2}\right]  +\widetilde{\tau_{\varphi}}\left[  \psi_{1}%
\cdot\widetilde{\tau_{f^{A}}}(\psi_{2})\right]  -\left(  \psi_{2}%
\cdot\widetilde{\tau_{\psi_{1}}}\right)  \left[  \widetilde{\tau_{f^{A}}%
}(\varphi)\right] \\
&  -\left(  \psi_{1}\cdot\widetilde{\tau_{\psi_{2}}}\right)  \left[
\widetilde{\tau_{f^{A}}}(\varphi)\right]  -\widetilde{\tau_{f^{A}}}\left[
\widetilde{\tau_{\varphi}}(\psi_{1})\cdot\psi_{2}\right]  -\widetilde
{\tau_{f^{A}}}\left[  \psi_{1}\cdot\widetilde{\tau_{\varphi}}(\psi_{2})\right]
\\
&  =\widetilde{\tau_{\varphi}}\left[  \widetilde{\tau_{f^{A}}}(\psi
_{1})\right]  \cdot\psi_{2}+\widetilde{\tau_{f^{A}}}(\psi_{1})\cdot
\widetilde{\tau_{\varphi}}(\psi_{2})+\widetilde{\tau_{\varphi}}\left(
\psi_{1}\right)  \cdot\widetilde{\tau_{f^{A}}}(\psi_{2})\\
&  +\psi_{1}\cdot\widetilde{\tau_{\varphi}}\left[  \widetilde{\tau_{f^{A}}%
}(\psi_{2})\right]  -\psi_{2}\cdot\widetilde{\tau_{\psi_{1}}}\left[
\widetilde{\tau_{f^{A}}}(\varphi)\right]  -\psi_{1}\cdot\widetilde{\tau
_{\psi_{2}}}\left[  \widetilde{\tau_{f^{A}}}(\varphi)\right] \\
&  -\widetilde{\tau_{f^{A}}}\left[  \widetilde{\tau_{\varphi}}(\psi
_{1})\right]  \cdot\psi_{2}-\widetilde{\tau_{\varphi}}(\psi_{1})\cdot
\widetilde{\tau_{f^{A}}}\left(  \psi_{2}\right) \\
&  -\widetilde{\tau_{f^{A}}}\left(  \psi_{1}\right)  \cdot\widetilde
{\tau_{\varphi}}(\psi_{2})-\psi_{1}\cdot\widetilde{\tau_{f^{A}}}\left[
\widetilde{\tau_{\varphi}}(\psi_{2})\right] \\
&  =\widetilde{\tau_{\varphi}}\left[  \widetilde{\tau_{f^{A}}}(\psi
_{1})\right]  \cdot\psi_{2}-\psi_{2}\cdot\widetilde{\tau_{\psi_{1}}}\left[
\widetilde{\tau_{f^{A}}}(\varphi)\right]  -\widetilde{\tau_{f^{A}}}\left[
\widetilde{\tau_{\varphi}}(\psi_{1})\right]  \cdot\psi_{2}\\
&  +\psi_{1}\cdot\widetilde{\tau_{\varphi}}\left[  \widetilde{\tau_{f^{A}}%
}(\psi_{2})\right]  -\psi_{1}\cdot\widetilde{\tau_{\psi_{2}}}\left[
\widetilde{\tau_{f^{A}}}(\varphi)\right]  -\psi_{1}\cdot\widetilde{\tau
_{f^{A}}}\left[  \widetilde{\tau_{\varphi}}(\psi_{2})\right] \\
&  =\left(  \widetilde{\tau_{\varphi}}\left[  \widetilde{\tau_{f^{A}}}%
(\psi_{1})\right]  -\widetilde{\tau_{\psi_{1}}}\left[  \widetilde{\tau_{f^{A}%
}}(\varphi)\right]  -\widetilde{\tau_{f^{A}}}\left[  \widetilde{\tau_{\varphi
}}(\psi_{1})\right]  \right)  \cdot\psi_{2}\\
&  +\psi_{1}\cdot\left(  \widetilde{\tau_{\varphi}}\left[  \widetilde
{\tau_{f^{A}}}(\psi_{2})\right]  -\widetilde{\tau_{\psi_{2}}}\left[
\widetilde{\tau_{f^{A}}}(\varphi)\right]  -\widetilde{\tau_{f^{A}}}\left[
\widetilde{\tau_{\varphi}}(\psi_{2})\right]  \right) \\
&  =H_{(f^{A},\varphi)}(\psi_{1})\cdot\psi_{2}+\psi_{1}\cdot H_{(f^{A}%
,\varphi)}(\psi_{2})
\end{align*}
Thus the application $H_{(f^{A},\varphi)}$ is an $A$-linear derivation.

The application%
\[
\sigma_{(f^{A},\varphi)}:C^{\infty}(M)\longrightarrow C^{\infty}(M^{A},A),
\]
defined by
\[
\sigma_{(f^{A},\varphi)}(g)=\widetilde{\tau_{\varphi}}\left[  \widetilde
{\tau_{f^{A}}}(g^{A})\right]  -\widetilde{\tau_{g^{A}}}\left[  \widetilde
{\tau_{f^{A}}}(\varphi)\right]  -\widetilde{\tau_{f^{A}}}\left[
\widetilde{\tau_{\varphi}}(g^{A})\right]
\]
for any $g\in C^{\infty}(M)$, is a vector field on $M^{A}$. It is obvious that
$\sigma_{(f^{A},\varphi)}$ is linear. For $g,h\in C^{\infty}(M)$, we get
\begin{align*}
\sigma_{(f^{A},\varphi)}(gh)  &  =\widetilde{\tau_{\varphi}}\left[
\widetilde{\tau_{f^{A}}}(gh)^{A}\right]  -\widetilde{\tau_{(gh)^{A}}}\left[
\widetilde{\tau_{f^{A}}}(\varphi)\right]  -\widetilde{\tau_{f^{A}}}\left[
\widetilde{\tau_{\varphi}}(gh)^{A}\right] \\
&  =\widetilde{\tau_{\varphi}}\left[  \widetilde{\tau_{f^{A}}}(g^{A}\cdot
h^{A})\right]  -\widetilde{\tau_{g^{A}\cdot h^{A}}}\left[  \widetilde
{\tau_{f^{A}}}(\varphi)\right]  -\widetilde{\tau_{f^{A}}}\left[
\widetilde{\tau_{\varphi}}(g^{A}\cdot h^{A})\right] \\
&  =\widetilde{\tau_{\varphi}}\left[  \widetilde{\tau_{f^{A}}}(g^{A})\cdot
h^{A}+g^{A}\cdot\widetilde{\tau_{f^{A}}}(h^{A})\right]  -g^{A}\cdot
\widetilde{\tau_{h^{A}}}\left[  \widetilde{\tau_{f^{A}}}(\varphi)\right] \\
&  -h^{A}\cdot\widetilde{\tau_{g^{A}}}\left[  \widetilde{\tau_{f^{A}}}%
(\varphi)\right]  -\widetilde{\tau_{f^{A}}}\left[  \widetilde{\tau_{\varphi}%
}(g^{A})\cdot h^{A}+g^{A}\cdot\widetilde{\tau_{\varphi}}(h^{A})\right] \\
&  =\widetilde{\tau_{\varphi}}[\widetilde{\tau_{f^{A}}}(g^{A})]\cdot
h^{A}+\widetilde{\tau_{f^{A}}}(g^{A})\cdot\widetilde{\tau_{\varphi}}%
(h^{A})+\widetilde{\tau_{\varphi}}(g^{A})\cdot\widetilde{\tau_{f^{A}}}%
(h^{A})\\
&  +g^{A}\cdot\widetilde{\tau_{\varphi}}\left[  \widetilde{\tau_{f^{A}}}%
(h^{A})\right]  -g^{A}\cdot\widetilde{\tau_{h^{A}}}\left[  \widetilde
{\tau_{f^{A}}}(\varphi)\right]  -h^{A}\cdot\widetilde{\tau_{g^{A}}}\left[
\widetilde{\tau_{f^{A}}}(\varphi)\right] \\
&  -\widetilde{\tau_{f^{A}}}[\widetilde{\tau_{\varphi}}(g^{A})]\cdot
h^{A}-\widetilde{\tau_{\varphi}}(g^{A})\cdot\widetilde{\tau_{f^{A}}}%
(h^{A})-\widetilde{\tau_{f^{A}}}(g^{A})\cdot\widetilde{\tau_{\varphi}}%
(h^{A})\\
&  -g^{A}\cdot\widetilde{\tau_{f^{A}}}\left[  \widetilde{\tau_{\varphi}}%
(h^{A})\right] \\
&  =\left(  \widetilde{\tau_{\varphi}}[\widetilde{\tau_{f^{A}}}(g^{A}%
)]-\widetilde{\tau_{g^{A}}}\left[  \widetilde{\tau_{f^{A}}}(\varphi)\right]
-\widetilde{\tau_{f^{A}}}[\widetilde{\tau_{\varphi}}(g^{A})]\right)  \cdot
h^{A}\\
&  +g^{A}\cdot\left(  \widetilde{\tau_{\varphi}}\left[  \widetilde{\tau
_{f^{A}}}(h^{A})\right]  -\widetilde{\tau_{h^{A}}}\left[  \widetilde
{\tau_{f^{A}}}(\varphi)\right]  -\widetilde{\tau_{f^{A}}}\left[
\widetilde{\tau_{\varphi}}(h^{A})\right]  \right) \\
&  =\sigma_{(f^{A},\varphi)}(g)\cdot h^{A}+g^{A}\cdot\sigma_{(f^{A},\varphi
)}(h)\text{.}%
\end{align*}
The application $\sigma_{(f^{A},\varphi)}$ is a vector field on $M^{A}$.

It is obvious that
\[
H_{(f^{A},\varphi)}(g^{A})=\sigma_{(f^{A},\varphi)}(g)
\]
for any $g\in C^{\infty}(M)$. Thus, \cite{bos}, we have
\[
H_{(f^{A},\varphi)}=\widetilde{\sigma_{(f^{A},\varphi)}}\text{.}%
\]
On the other hand,%
\begin{align*}
\widetilde{\sigma_{(f^{A},\varphi)}}(g^{A})  &  =\widetilde{\tau_{\varphi}%
}\left[  \widetilde{\tau_{f^{A}}}(g^{A})\right]  -\widetilde{\tau_{g^{A}}%
}\left[  \widetilde{\tau_{f^{A}}}(\varphi)\right]  -\widetilde{\tau_{f^{A}}%
}\left[  \widetilde{\tau_{\varphi}}(g^{A})\right] \\
&  =\widetilde{\tau_{\varphi}}\left[  \widetilde{\tau_{f^{A}}}(g^{A})\right]
-\widetilde{\tau}_{g^{A}}\left[  \widetilde{\tau_{f^{A}}}(\varphi)\right]
+\widetilde{\tau_{f^{A}}}\left[  \widetilde{\tau_{g^{A}}}(\varphi)\right] \\
&  =\widetilde{\tau_{\varphi}}\left[  \widetilde{\tau_{f^{A}}}(g^{A})\right]
+\left[  \widetilde{\tau_{f^{A}}},\widetilde{\tau_{g^{A}}}\right]  (\varphi)\\
&  =\widetilde{\tau_{\varphi}}\left[  \widetilde{\tau_{f^{A}}}(g^{A})\right]
+\left[  \widetilde{[ad(f)]^{A}},\widetilde{[ad(g)]^{A}}\right]  (\varphi)\\
&  =\widetilde{\tau_{\varphi}}\left(  \left\{  f,g\right\}  ^{A}\right)
+\widetilde{[ad\left\{  f,g\right\}  ]^{A}}(\varphi)\\
&  =-\widetilde{[ad\left\{  f,g\right\}  ]^{A}}(\varphi)+\widetilde
{[ad\left\{  f,g\right\}  ]^{A}}(\varphi)\\
&  =0
\end{align*}
for any $g\in C^{\infty}(M)$. We have, \cite{bos},
\[
\widetilde{\sigma_{(f^{A},\varphi)}}=0
\]
i.e. $H_{(f^{A},\varphi)}=0$.
\end{proof}

\begin{proposition}
For any $\varphi,\psi\in C^{\infty}(M^{A},A)$, then
\[
\left[  \widetilde{\tau_{\varphi}},\widetilde{\tau_{\psi}}\right]
=\widetilde{\tau_{\left\{  \varphi,\psi\right\}  _{A}}}\text{.}%
\]

\end{proposition}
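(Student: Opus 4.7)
The plan is to invoke the uniqueness principle for $A$-linear derivations recalled in the introduction: an $A$-linear derivation $Y\colon C^{\infty}(M^{A},A)\to C^{\infty}(M^{A},A)$ that agrees with $\widetilde{X}$ on every $f^{A}$ (for $f\in C^{\infty}(M)$) must coincide with $\widetilde{X}$. Since the right-hand side $\widetilde{\tau_{\{\varphi,\psi\}_{A}}}$ is already the canonical lift of a vector field on $M^{A}$, it will suffice to show that the commutator $[\widetilde{\tau_{\varphi}},\widetilde{\tau_{\psi}}]$ is an $A$-linear derivation which matches $\widetilde{\tau_{\{\varphi,\psi\}_{A}}}$ on the subalgebra $\{f^{A}\mid f\in C^{\infty}(M)\}$.

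First I would observe that the bracket of two $A$-linear derivations of $C^{\infty}(M^{A},A)$ is again an $A$-linear derivation, so $[\widetilde{\tau_{\varphi}},\widetilde{\tau_{\psi}}]$ is an admissible candidate for the uniqueness statement. Next, given $f\in C^{\infty}(M)$, I would compute
\[
[\widetilde{\tau_{\varphi}},\widetilde{\tau_{\psi}}](f^{A})
=\widetilde{\tau_{\varphi}}\bigl(\widetilde{\tau_{\psi}}(f^{A})\bigr)
-\widetilde{\tau_{\psi}}\bigl(\widetilde{\tau_{\varphi}}(f^{A})\bigr),
\]
and use the identities $\widetilde{\tau_{\chi}}(f^{A})=\tau_{\chi}(f)=-\widetilde{[ad(f)]^{A}}(\chi)=-\widetilde{\tau_{f^{A}}}(\chi)$ (valid for every $\chi\in C^{\infty}(M^{A},A)$ by the very definition of $\tau_{\chi}$ and the identity $\widetilde{\tau_{f^{A}}}=\widetilde{[ad(f)]^{A}}$ established earlier) to rewrite this as
\[
[\widetilde{\tau_{\varphi}},\widetilde{\tau_{\psi}}](f^{A})
=-\widetilde{\tau_{\varphi}}\bigl(\widetilde{\tau_{f^{A}}}(\psi)\bigr)
+\widetilde{\tau_{\psi}}\bigl(\widetilde{\tau_{f^{A}}}(\varphi)\bigr).
\]

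The key input is the preceding proposition, which asserts $H_{(f^{A},\varphi)}=0$, i.e.
\[
\widetilde{\tau_{\varphi}}\bigl[\widetilde{\tau_{f^{A}}}(\psi)\bigr]
-\widetilde{\tau_{\psi}}\bigl[\widetilde{\tau_{f^{A}}}(\varphi)\bigr]
=\widetilde{\tau_{f^{A}}}\bigl[\widetilde{\tau_{\varphi}}(\psi)\bigr]
=\widetilde{\tau_{f^{A}}}\bigl(\{\varphi,\psi\}_{A}\bigr).
\]
Substituting this into the previous expression yields
\[
[\widetilde{\tau_{\varphi}},\widetilde{\tau_{\psi}}](f^{A})
=-\widetilde{\tau_{f^{A}}}\bigl(\{\varphi,\psi\}_{A}\bigr)
=\widetilde{\tau_{\{\varphi,\psi\}_{A}}}(f^{A}),
\]
where the last equality is again the general identity $\widetilde{\tau_{\chi}}(f^{A})=-\widetilde{\tau_{f^{A}}}(\chi)$ applied with $\chi=\{\varphi,\psi\}_{A}$. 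The uniqueness statement from the introduction then forces the two $A$-linear derivations to coincide, which is the desired equality.

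The only real obstacle is bookkeeping the signs and identifying $\widetilde{\tau_{f^{A}}}$ with $\widetilde{[ad(f)]^{A}}$ in the right places; once the $H_{(f^{A},\varphi)}=0$ identity is available from the previous proposition, the rest is a short substitution and an appeal to uniqueness.
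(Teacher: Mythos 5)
Your proof is correct and follows essentially the same route as the paper's: both evaluate the commutator on the generators $f^{A}$, use the vanishing of $H_{(f^{A},\varphi)}$ from the preceding proposition to identify the result with $\widetilde{\tau_{\{\varphi,\psi\}_{A}}}(f^{A})$, and conclude by the uniqueness of $A$-linear derivations agreeing on all $f^{A}$. The only difference is cosmetic: you substitute directly via $\widetilde{\tau_{\chi}}(f^{A})=-\widetilde{\tau_{f^{A}}}(\chi)$, whereas the paper rewrites the same identity in bracket notation and uses skew-symmetry.
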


\begin{proof}
For $f\in C^{\infty}(M)$ and $\varphi\in C^{\infty}(M^{A},A)$, as
$H_{(f^{A},\varphi)}=0$, then
\[
H_{(f^{A},\varphi)}(\psi)=0
\]
for any $\psi\in C^{\infty}(M^{A},A)$. Thus, we obtain
\begin{align*}
\widetilde{\tau_{f^{A}}}\left[  \widetilde{\tau_{\varphi}}(\psi)\right]   &
=\widetilde{\tau_{\varphi}}\left[  \widetilde{\tau_{f^{A}}}(\psi)\right]
-\widetilde{\tau_{\psi}}\left[  \widetilde{\tau_{f^{A}}}(\varphi)\right] \\
\left\{  f^{A},\left\{  \varphi,\psi\right\}  _{A}\right\}  _{A}  &  =\left\{
\varphi,\left\{  f^{A},\psi\right\}  _{A}\right\}  _{A}-\left\{  \psi,\left\{
f^{A},\varphi\right\}  _{A}\right\}  _{A}\\
-\left\{  \left\{  \varphi,\psi\right\}  _{A},f^{A}\right\}  _{A}  &
=-\left\{  \varphi,\left\{  \psi,f^{A}\right\}  _{A}\right\}  _{A}+\left\{
\psi,\left\{  \varphi,f^{A}\right\}  _{A}\right\}  _{A}\text{.}%
\end{align*}
i.e.%
\begin{align*}
\left\{  \left\{  \varphi,\psi\right\}  _{A},f^{A}\right\}  _{A}  &  =\left\{
\varphi,\left\{  \psi,f^{A}\right\}  _{A}\right\}  _{A}-\left\{  \psi,\left\{
\varphi,f^{A}\right\}  _{A}\right\}  _{A}\\
\widetilde{\tau_{\left\{  \varphi,\psi\right\}  _{A}}}(f^{A})  &
=(\widetilde{\tau_{\varphi}}\circ\widetilde{\tau_{\psi}})(\left(
f^{A}\right)  -(\widetilde{\tau_{\psi}}\circ\widetilde{\tau_{\varphi}%
})(\left(  f^{A}\right) \\
&  =\left[  \widetilde{\tau_{\varphi}},\widetilde{\tau_{\psi}}\right]  \left(
f^{A}\right)  \text{.}%
\end{align*}
As for any $f\in C^{\infty}(M)$, we have
\[
\left[  \widetilde{\tau_{\varphi}},\widetilde{\tau_{\psi}}\right]  \left(
f^{A}\right)  =\widetilde{\tau_{\left\{  \varphi,\psi\right\}  _{A}}}%
(f^{A})\text{.}%
\]
Therefore, \cite{bos},
\[
\left[  \widetilde{\tau_{\varphi}},\widetilde{\tau_{\psi}}\right]
=\widetilde{\tau_{\left\{  \varphi,\psi\right\}  _{A}}}\text{.}%
\]
That ends the proof.
\end{proof}

We now will show the identity of Jacobi.

\begin{proposition}
For any $\varphi,\psi,\phi\in C^{\infty}(M^{A},A)$, then%
\[
\left\{  \varphi,\left\{  \psi,\phi\right\}  _{A}\right\}  _{A}+\left\{
\psi,\left\{  \phi,\varphi\right\}  _{A}\right\}  _{A}+\left\{  \phi,\left\{
\varphi,\psi\right\}  _{A}\right\}  _{A}=0\text{.}%
\]

\end{proposition}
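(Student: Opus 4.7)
The plan is to derive the Jacobi identity as an immediate algebraic consequence of the commutator identity $\left[\widetilde{\tau_{\varphi}},\widetilde{\tau_{\psi}}\right] = \widetilde{\tau_{\{\varphi,\psi\}_{A}}}$ established in the previous proposition, together with the skew-symmetry of $\{,\}_{A}$ shown earlier. No further appeal to the extension principle (from derivations on $C^{\infty}(M)$ to derivations on $C^{\infty}(M^{A},A)$) should be required; all the hard work involving uniqueness of the derivation extension was absorbed into the commutator identity.

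Concretely, I would first rewrite each cyclic term using the definition $\{\alpha,\beta\}_{A} = \widetilde{\tau_{\alpha}}(\beta)$, so that
\[
\{\varphi,\{\psi,\phi\}_{A}\}_{A} = \widetilde{\tau_{\varphi}}\!\left(\widetilde{\tau_{\psi}}(\phi)\right), \qquad \{\psi,\{\varphi,\phi\}_{A}\}_{A} = \widetilde{\tau_{\psi}}\!\left(\widetilde{\tau_{\varphi}}(\phi)\right).
\]
Then I would evaluate the previous proposition's identity on $\phi$:
\[
\widetilde{\tau_{\varphi}}\!\left(\widetilde{\tau_{\psi}}(\phi)\right) - \widetilde{\tau_{\psi}}\!\left(\widetilde{\tau_{\varphi}}(\phi)\right) = \widetilde{\tau_{\{\varphi,\psi\}_{A}}}(\phi) = \{\{\varphi,\psi\}_{A},\phi\}_{A}.
\]

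Finally I would apply skew-symmetry to convert $\{\{\varphi,\psi\}_{A},\phi\}_{A} = -\{\phi,\{\varphi,\psi\}_{A}\}_{A}$ on the right-hand side, and $\widetilde{\tau_{\psi}}(\widetilde{\tau_{\varphi}}(\phi)) = \{\psi,\{\varphi,\phi\}_{A}\}_{A} = -\{\psi,\{\phi,\varphi\}_{A}\}_{A}$ on the left-hand side. After transposing the three terms onto one side, the cyclic sum is exactly zero.

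I do not expect any serious obstacle here; the calculation is essentially the standard textbook derivation of Jacobi from a homomorphism-type relation between a bracket and a commutator. The only point to watch is careful sign bookkeeping from the two applications of skew-symmetry, but even this is routine.
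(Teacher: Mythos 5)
Your proposal is correct and is essentially the paper's own argument: the paper likewise rewrites two of the cyclic terms via skew-symmetry, expresses the sum as $\bigl(\left[\widetilde{\tau_{\varphi}},\widetilde{\tau_{\psi}}\right]-\widetilde{\tau_{\left\{\varphi,\psi\right\}_{A}}}\bigr)(\phi)$, and concludes from the commutator identity of the preceding proposition. The sign bookkeeping in your outline checks out, so no gap remains.
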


\begin{proof}
For $\varphi,\psi,\phi\in C^{\infty}(M^{A},A)$, we obtain%
\begin{align*}
&  \left\{  \varphi,\left\{  \psi,\phi\right\}  _{A}\right\}  _{A}+\left\{
\psi,\left\{  \phi,\varphi\right\}  _{A}\right\}  _{A}+\left\{  \phi,\left\{
\varphi,\psi\right\}  _{A}\right\}  _{A}\\
&  =\left\{  \varphi,\left\{  \psi,\phi\right\}  _{A}\right\}  _{A}-\left\{
\psi,\left\{  \varphi,\phi\right\}  _{A}\right\}  _{A}-\left\{  \left\{
\varphi,\psi\right\}  _{A},\phi\right\}  _{A}\\
&  =\widetilde{\tau_{\varphi}}\left[  \widetilde{\tau_{\psi}}(\phi)\right]
-\widetilde{\tau_{\psi}}\left[  \widetilde{\tau_{\varphi}}(\phi)\right]
-\widetilde{\tau_{\left\{  \varphi,\psi\right\}  _{A}}}(\phi)\\
&  =(\left[  \widetilde{\tau_{\varphi}},\widetilde{\tau_{\psi}}\right]
-\widetilde{\tau_{\left\{  \varphi,\psi\right\}  _{A}}})(\phi)\\
&  =0\text{.}%
\end{align*}
That ends the proof.
\end{proof}

Thus, we have shown the following theorem:

\begin{theorem}
If $M$ is a Poisson manifold with bracket $\left\{  ,\right\}  $, then $M^{A}$
is a $A$-Poisson manifold with the bracket%
\[
\left\{  ,\right\}  _{A}:C^{\infty}(M^{A},A)\times C^{\infty}(M^{A}%
,A)\longrightarrow C^{\infty}(M^{A},A),(\varphi,\psi)\longmapsto\left\{
\varphi,\psi\right\}  _{A}=\widetilde{\tau}_{\varphi}(\psi)\text{.}%
\]

\end{theorem}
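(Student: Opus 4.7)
The plan is to observe that, by the point this theorem is stated, every ingredient needed for a Poisson $A$-algebra structure on $C^{\infty}(M^{A},A)$ has already been isolated in one of the preceding propositions. The theorem is therefore a packaging statement, and my proof would simply go through the four axioms of a Poisson $A$-algebra in turn and cite the relevant proposition for each.

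First I would fix the bracket $\{\varphi,\psi\}_{A}=\widetilde{\tau_{\varphi}}(\psi)$ and list what has to be checked: (i) $A$-bilinearity, (ii) skew-symmetry, (iii) the Leibniz rule in the second argument, (iv) the Jacobi identity. For (i), $A$-linearity in the second slot is automatic from the $A$-linearity of the derivation $\widetilde{\tau_{\varphi}}$, and $A$-linearity in the first slot is exactly the content of the proposition giving $\widetilde{\tau_{\varphi+\psi}}=\widetilde{\tau_{\varphi}}+\widetilde{\tau_{\psi}}$ and $\widetilde{\tau_{a\cdot\varphi}}=a\cdot\widetilde{\tau_{\varphi}}$. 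For (ii), I invoke the proposition proving $\widetilde{\tau_{\varphi}}(\psi)+\widetilde{\tau_{\psi}}(\varphi)=0$, which was obtained by checking that the associated vector field $\sigma_{\varphi}$ vanishes on every $f^{A}$ and then using the uniqueness part of the identification between $\mathfrak{X}(M^{A})$ and its $A$-linear extensions. For (iii), the Leibniz rule is immediate from the fact that $\widetilde{\tau_{\varphi}}$ is a derivation of $C^{\infty}(M^{A},A)$.

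The only genuinely substantial item is (iv), but it too has already been handled: the proposition $H_{(f^{A},\varphi)}=0$ yielded the intertwining identity $[\widetilde{\tau_{\varphi}},\widetilde{\tau_{\psi}}]=\widetilde{\tau_{\{\varphi,\psi\}_{A}}}$, and the subsequent Jacobi proposition derived the cyclic identity from this. So my proof would simply write
\[
\{\varphi,\{\psi,\phi\}_{A}\}_{A}+\{\psi,\{\phi,\varphi\}_{A}\}_{A}+\{\phi,\{\varphi,\psi\}_{A}\}_{A}=\bigl([\widetilde{\tau_{\varphi}},\widetilde{\tau_{\psi}}]-\widetilde{\tau_{\{\varphi,\psi\}_{A}}}\bigr)(\phi)=0
\]
and cite that proposition.

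In other words, the hard part of the theorem is \emph{already} behind us: the genuine computational obstacle was the verification $H_{(f^{A},\varphi)}=0$, which required extending the putative derivation identity from $f^{A}$ to all of $C^{\infty}(M^{A},A)$ via the uniqueness of $A$-linear extensions of vector fields. Once that is in hand, the present theorem reduces to a one-paragraph assembly of Propositions stated above, with no further calculation needed beyond combining bilinearity, antisymmetry, Leibniz, and Jacobi into the single statement that $(C^{\infty}(M^{A},A),\{\,,\,\}_{A})$ is a Poisson $A$-algebra, i.e.\ that $M^{A}$ is an $A$-Poisson manifold.
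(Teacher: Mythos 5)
Your proposal is correct and follows essentially the same route as the paper: the paper introduces the theorem with the phrase ``Thus, we have shown the following theorem,'' i.e.\ it likewise treats the statement as a direct assembly of the preceding propositions ($A$-bilinearity and skew-symmetry, the Leibniz rule, the identity $[\widetilde{\tau_{\varphi}},\widetilde{\tau_{\psi}}]=\widetilde{\tau_{\{\varphi,\psi\}_{A}}}$ obtained from $H_{(f^{A},\varphi)}=0$, and the Jacobi identity). Your identification of $H_{(f^{A},\varphi)}=0$ as the only genuinely substantial ingredient matches the structure of the paper's argument.
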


In this case, we will say that the structure of $A$-Poisson manifold on
$M^{A}$\ defined by $\left\{  ,\right\}  _{A}$ is the prolongation on $M^{A}%
$\ of the structure of Poisson manifold on $M$ defined by $\left\{  ,\right\}
$.

\subsection{Structure of $A$-Poisson manifold on $M^{A}$ when $M$ is a
symplectic manifold}

\begin{proposition}
If $\omega$ is a differential form on $M$ and if $\theta$ is a vector field on
$M$, then
\[
(i_{\theta}\omega)^{A}=i_{\theta^{A}}(\omega^{A})\text{.}%
\]

\end{proposition}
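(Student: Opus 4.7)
The plan is to exploit the uniqueness property of the prolongation of differential forms recalled earlier in the paper: an $A$-form on $M^{A}$ is determined by its values on tuples of prolonged vector fields. Since $i_{\theta}\omega$ and $\omega$ are ordinary differential forms on $M$ (of degrees $p-1$ and $p$ respectively), their prolongations $(i_{\theta}\omega)^{A}$ and $\omega^{A}$ are well-defined $A$-forms on $M^{A}$, and $i_{\theta^{A}}(\omega^{A})$ is also an $A$-form on $M^{A}$ of degree $p-1$. So it will suffice to check that the two $A$-forms $(i_{\theta}\omega)^{A}$ and $i_{\theta^{A}}(\omega^{A})$ agree when evaluated on an arbitrary $(p-1)$-tuple $(\theta_{1}^{A},\ldots,\theta_{p-1}^{A})$ of prolongations of vector fields on $M$.

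First I would unwrap the left-hand side. By the defining property of the prolongation of a differential form,
\[
(i_{\theta}\omega)^{A}(\theta_{1}^{A},\ldots,\theta_{p-1}^{A})=[(i_{\theta}\omega)(\theta_{1},\ldots,\theta_{p-1})]^{A}=[\omega(\theta,\theta_{1},\ldots,\theta_{p-1})]^{A}.
\]
Next I would unwrap the right-hand side, again using only the definitions of the interior product and of the prolongation,
\[
i_{\theta^{A}}(\omega^{A})(\theta_{1}^{A},\ldots,\theta_{p-1}^{A})=\omega^{A}(\theta^{A},\theta_{1}^{A},\ldots,\theta_{p-1}^{A})=[\omega(\theta,\theta_{1},\ldots,\theta_{p-1})]^{A}.
\]
The two expressions coincide, so the two $A$-forms agree on every $(p-1)$-tuple of prolonged vector fields.

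The final step is to promote this pointwise equality on prolonged tuples to equality as $A$-forms on $M^{A}$. This is precisely the uniqueness statement already used in the excerpt (the fact that $\omega^{A}$ is the unique $A$-form satisfying the prolongation identity), which reflects the fact that prolonged vector fields generate $\mathfrak{X}(M^{A})$ as a $C^{\infty}(M^{A},A)$-module in a way compatible with skew-symmetric $A$-multilinearity. Invoking this uniqueness yields $(i_{\theta}\omega)^{A}=i_{\theta^{A}}(\omega^{A})$. There is essentially no obstacle here beyond the bookkeeping; the content of the proof is really the two one-line computations above, and the only subtle point is the appeal to uniqueness, which is a standard fact from the theory of near points already used repeatedly in the paper.
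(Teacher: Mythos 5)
Your proposal is correct and follows essentially the same route as the paper: both evaluate $(i_{\theta}\omega)^{A}$ and $i_{\theta^{A}}(\omega^{A})$ on an arbitrary tuple of prolonged vector fields $(\theta_{1}^{A},\ldots,\theta_{p-1}^{A})$, obtain $\left[\omega(\theta,\theta_{1},\ldots,\theta_{p-1})\right]^{A}$ in each case, and conclude by the uniqueness of the prolonged $A$-form. No discrepancy to report.
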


\begin{proof}
If the degree of $\omega$ is $p$, then $(i_{\theta}\omega)^{A}$ is the unique
differential $A$-form of degree $p-1$ such that
\begin{align*}
(i_{\theta}\omega)^{A}(\theta_{1}^{A},...,\theta_{p-1}^{A})  &  =\left[
(i_{\theta}\omega)(\theta_{1},...,\theta_{p-1})\right]  ^{A}\\
&  =\left[  \omega(\theta,\theta_{1},...,\theta_{p-1})\right]  ^{A}%
\end{align*}
for any $\theta_{1},\theta_{2},...,\theta_{p-1}\ \in$ $\mathfrak{X}(M)$. As
$i_{\theta^{A}}(\omega^{A})$ is of degree $p-1$ and is such that%
\begin{align*}
i_{\theta^{A}}(\omega^{A})\left[  \theta_{1}^{A},...,\theta_{p-1}^{A}\right]
&  =\omega^{A}(\theta^{A},\theta_{1}^{A},...,\theta_{p-1}^{A})\\
&  =\left[  \omega(\theta,\theta_{1},...,\theta_{p-1})\right]  ^{A}%
\end{align*}
for any $\theta_{1},\theta_{2},...,\theta_{p-1}\ \in$ $\mathfrak{X}(M)$, we
conclude that $(i_{\theta}\omega)^{A}=i_{\theta^{A}}(\omega^{A})$.
\end{proof}

\begin{proposition}
If $(M,\Omega)$ is a symplectic manifold, then the application
\[
\mathfrak{X}(M^{A})\longrightarrow\Lambda^{1}(M^{A},A),X\longmapsto
i_{X}\Omega^{A},
\]
is an isomorphism of $C^{\infty}(M^{A},A)$-modules.
\end{proposition}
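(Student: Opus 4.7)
The plan is to exploit the standard local normal form available on a symplectic manifold. First I would note that $X \mapsto i_X \Omega^A$ is manifestly $C^\infty(M^A,A)$-linear, since interior product is linear in the vector-field slot, so only injectivity and surjectivity are at stake. Both are local properties on $M^A$, so it suffices to verify the claim on open sets of the form $U^A$ where $U$ is a chart on $M$.

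Next, I would fix a chart $(U,(x_{1},\ldots,x_{n}))$ and identify bases. The module $\mathfrak{X}(U^{A})$ of derivations $C^{\infty}(U)\to C^{\infty}(U^{A},A)$ is free of rank $n$ over $C^{\infty}(U^{A},A)$, with basis the prolongations $(\partial/\partial x_{i})^{A}$: a derivation is determined by, and can realize, arbitrary $A$-valued values on the coordinate functions $x_{i}$. Dually, $\Lambda^{1}(U^{A},A)$ is free with basis $(dx_{j})^{A}$. By the proposition just proved,
\[
i_{(\partial/\partial x_{i})^{A}}\Omega^{A}=(i_{\partial/\partial x_{i}}\Omega)^{A}=\sum_{j}\Omega_{ij}^{A}\,(dx_{j})^{A},
\]
where $\Omega_{ij}=\Omega(\partial/\partial x_{i},\partial/\partial x_{j})$, so the map is represented by the matrix $(\Omega_{ij}^{A})$ in these bases.

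Finally, because prolongation is an algebra homomorphism, $\det(\Omega_{ij}^{A})=(\det\Omega_{ij})^{A}$. The symplectic hypothesis guarantees that $\det\Omega_{ij}$ is nowhere vanishing on $U$, hence $1/\det\Omega_{ij}\in C^{\infty}(U)$, and its prolongation is a two-sided inverse to $(\det\Omega_{ij})^{A}$ in $C^{\infty}(U^{A},A)$. Thus the matrix $(\Omega_{ij}^{A})$ is invertible over $C^{\infty}(U^{A},A)$, and the map is an isomorphism over $U^{A}$. Gluing over a chart cover of $M$ gives the global statement.

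The main obstacle will be the free-basis identification: rigorously establishing that $\mathfrak{X}(U^{A})$ is free over $C^{\infty}(U^{A},A)$ with basis $\{(\partial/\partial x_{i})^{A}\}$. This requires a Taylor-type decomposition showing that a derivation $C^{\infty}(U)\to C^{\infty}(U^{A},A)$ is both uniquely determined by, and can be prescribed arbitrarily via, its values on the coordinate functions. Once that is in place, the remainder is routine linear algebra over $C^{\infty}(U^{A},A)$, together with the observation that a smooth, nowhere-vanishing real function has a prolongation that is pointwise a unit of $A$ (because it has the form $\lambda+m$ with $\lambda\in\mathbb{R}^{\times}$ and $m\in\mathfrak{m}$ nilpotent).
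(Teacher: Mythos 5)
Your proof is correct, and it reaches the conclusion by a recognizably similar overall strategy but with a genuinely different mechanism at the key step. Both arguments reduce to a chart $U$, use that $\mathfrak{X}(U^{A})$ is free over $C^{\infty}(U^{A},A)$ on the prolonged coordinate fields $\left(\partial/\partial x_{i}\right)^{A}$ with $\Lambda^{1}(U^{A},A)$ free on the dual basis $d^{A}(x_{i}^{A})$ --- a fact the paper itself uses without separate comment (it writes $X|_{U^{A}}=\sum_{i}f_{i}(\partial/\partial x_{i})^{A}+\dots$ and $\eta|_{U^{A}}=\sum_{i}h_{i}d^{A}(x_{i}^{A})+\dots$, relying on \cite{bos}), so the ``main obstacle'' you flag is already part of the ambient toolkit --- and then glue over a cover. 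Where you differ is on the chart: the paper invokes Darboux's theorem to put $\Omega|_{U}$ into the normal form $\sum_{i}dx_{i}\wedge dx_{i+n}$, after which injectivity is read off componentwise and a preimage is written down explicitly; you work in arbitrary coordinates, represent the map by the matrix $\left(\Omega_{ij}^{A}\right)$ via the preceding proposition $(i_{\theta}\omega)^{A}=i_{\theta^{A}}(\omega^{A})$, and invert it because $\det\left(\Omega_{ij}^{A}\right)=\left(\det\Omega_{ij}\right)^{A}$ is a unit of $C^{\infty}(U^{A},A)$, the prolongation of a nowhere-vanishing function. Your route trades the (nontrivial) Darboux normal form for the elementary observation that prolongation, being an algebra homomorphism, sends units to units; as a bonus it never uses $d\Omega=0$, so it establishes the statement for any nondegenerate $2$-form, whereas the paper's normal-form computation has the advantage of exhibiting the inverse map completely explicitly, which is what it then glues.
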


\begin{proof}
Let $X$ be a vector field on $M^{A}$ such that $i_{X}\Omega^{A}=0$. Let
$\xi\in M^{A}$ with origin $x_{0}\in M$. As $\Omega$ is a symplectic form, we
can choose a system of local coordinates $(x_{1},x_{2},....,x_{2n})$ on an
open $U$, $x_{0}\in U$,\ such that
\[
\Omega|_{U}=\underset{i=1}{\overset{n}{%
{\displaystyle\sum}
}}dx_{i}\ \Lambda dx_{i+n}\text{.}%
\]
Thus $\xi\in U^{A}$ and
\[
\Omega^{A}|_{U^{A}}=\underset{i=1}{\overset{n}{%
{\displaystyle\sum}
}}d^{A}(x_{i}^{A})\ \Lambda d^{A}(x_{i+n}^{A})\text{.}%
\]
As $i_{X}\Omega^{A}=0$, by writing $X|_{U^{A}}=\underset{i=1}{\overset{n}{%
{\displaystyle\sum}
}}f_{i}\left(  \frac{\partial}{\partial x_{i}}\right)  ^{A}+\underset
{i=1}{\overset{n}{%
{\displaystyle\sum}
}}f_{i+n}\left(  \frac{\partial}{\partial x_{i+n}}\right)  ^{A}$where
$f_{i},f_{i+n}\in C^{\infty}(U^{A},A)$ for $i=1,2,...,n$, we have%
\begin{align*}
0  &  =\left[  i_{X|_{U^{A}}}\Omega^{A}|_{U^{A}}\right]  \left(
(\frac{\partial}{\partial x_{i}})^{A}\right) \\
&  =-f_{i+n}%
\end{align*}
and
\begin{align*}
0  &  =\left[  i_{X|_{U^{A}}}\Omega^{A}|_{U^{A}}\right]  \left(
(\frac{\partial}{\partial x_{i+n}})^{A}\right) \\
&  =f_{i}%
\end{align*}
for $i=1,2,...,n$. Thus $X|_{U^{A}}=0$. Therefore $X(\xi)=0$. As $\xi$ is
arbitrary, we have $X=0$. The application
\[
\mathfrak{X}(M^{A})\longrightarrow\Lambda^{1}(M^{A},A),X\longmapsto
i_{X}\Omega^{A},
\]
is injective.

Let $\eta\in\Lambda^{1}(M^{A},A)$, $\xi\in M^{A}$ with origine $x_{0}\in U$
where $(U,\varphi)$ is a chart with local coordinates $(x_{1},x_{2}%
,....,x_{2n})$ such that
\[
\Omega|_{U}=\underset{i=1}{\overset{n}{%
{\displaystyle\sum}
}}dx_{i}\ \Lambda dx_{i+n}%
\]
and
\[
\Omega^{A}|_{U^{A}}=\underset{i=1}{\overset{n}{%
{\displaystyle\sum}
}}d^{A}(x_{i}^{A})\ \Lambda d^{A}(x_{i+n}^{A})\text{.}%
\]

By writing
\[
\eta|_{U^{A}}=\underset{i=1}{\overset{n}{%
{\displaystyle\sum}
}}h_{i}d^{A}(x_{i}^{A})+\underset{i=1}{\overset{n}{%
{\displaystyle\sum}
}}h_{i+n}d^{A}(x_{i+n}^{A}),
\]
where $h_{i},h_{i+n}\in C^{\infty}(U^{A},A)$ for $i=1,2,...,n$, we verify that
the vector field
\[
\theta_{U^{A}}=\underset{i=1}{\overset{n}{%
{\displaystyle\sum}
}}h_{i+n}\left(  \frac{\partial}{\partial x_{i}}\right)  ^{A}-\underset
{i=1}{\overset{n}{%
{\displaystyle\sum}
}h_{i}}\left(  \frac{\partial}{\partial x_{i+n}}\right)  ^{A}%
\]
is such that $i_{\theta_{U^{A}}}\Omega^{A}|_{U^{A}}=\eta|_{U^{A}}$. If
$(V,\psi)$ is an other chart around $x_{0}$ with local coordinates
$(x_{1}^{^{\prime}},x_{2}^{^{\prime}},....,x_{2n}^{^{\prime}})$ such that
\[
\Omega|_{V}=\underset{i=1}{\overset{n}{%
{\displaystyle\sum}
}}dx_{i}^{\prime}\ \Lambda dx_{i+n}^{^{\prime}}%
\]
and
\[
\Omega^{A}|_{V^{A}}=\underset{i=1}{\overset{n}{%
{\displaystyle\sum}
}}d^{A}(x_{i}^{^{\prime}A})\ \Lambda d^{A}(x_{i+n}^{^{\prime}A})\text{.}%
\]
We have
\begin{align*}
\eta|_{U^{A}\cap V^{A}}  &  =(\eta|_{U^{A}})|_{U^{A}\cap V^{A}}\\
&  =(i_{\theta_{U^{A}}|_{U^{A}\cap V^{A}}}\Omega^{A})|_{U^{A}\cap V^{A}}%
\end{align*}
and
\begin{align*}
\eta|_{U^{A}\cap V^{A}}  &  =(\eta|_{V^{A}})|_{U^{A}\cap V^{A}}\\
&  =(i_{\theta_{V^{A}}|_{U^{A}\cap V^{A}}}\Omega^{A})|_{U^{A}\cap V^{A}}%
\end{align*}
Thus $\theta_{U^{A}}|_{U^{A}\cap V^{A}}=\theta_{V^{A}}|_{U^{A}\cap V^{A}}$. If
$(U_{i})_{i\in I}$ is a covering of $M$ with such opens, then there exists a
vector field $X$ on $M^{A}$ such that
\[
X|_{U_{i}^{A}}=\theta_{U_{i}^{A}}\text{.}%
\]
We have $\eta=i_{X}\Omega^{A}$ and we conclude that the application
\[
\mathfrak{X}(M^{A})\longrightarrow\Lambda^{1}(M^{A},A),X\longmapsto
i_{X}\Omega^{A},
\]
is surjective.
\end{proof}

\begin{corollary}
When $(M,\Omega)$ is a symplectic manifold, then $(M^{A},\Omega^{A})$ is a
symplectic $A$-manifold.
\end{corollary}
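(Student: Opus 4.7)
The plan is to unpack the definition of a symplectic $A$-manifold and check that the two required properties of $\Omega^{A}$ have already been established in the paper. A symplectic $A$-manifold is a pair consisting of an $A$-manifold together with a closed and non-degenerate differential $A$-form of degree $2$, so I will verify these two conditions separately.

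First, I would observe that $\Omega^{A}\in\Lambda^{2}(M^{A},A)$ by the general construction of prolongations of differential forms recalled in the introduction. Closedness is then immediate: since $\Omega$ is symplectic we have $d\Omega=0$, and the identity $d^{A}(\omega^{A})=(d\omega)^{A}$ (stated in the excerpt) gives $d^{A}(\Omega^{A})=(d\Omega)^{A}=0$.

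For non-degeneracy, the content to invoke is precisely the preceding proposition, which asserts that the $C^{\infty}(M^{A},A)$-linear map
\[
\mathfrak{X}(M^{A})\longrightarrow\Lambda^{1}(M^{A},A),\quad X\longmapsto i_{X}\Omega^{A},
\]
is an isomorphism. This is exactly the non-degeneracy condition for $\Omega^{A}$ viewed as an $A$-valued $2$-form on the $A$-manifold $M^{A}$: at each infinitely near point, and more structurally at the level of the module of vector fields, the musical map induced by $\Omega^{A}$ is bijective.

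Since there is no substantive obstacle beyond appealing to the two facts already in hand, the corollary reduces to combining these two ingredients. I do not expect a hard step; the only point worth being slightly careful about is verifying that the notion of non-degeneracy used in the definition of a symplectic $A$-manifold is indeed equivalent to the bijectivity of $X\mapsto i_{X}\Omega^{A}$ as an isomorphism of $C^{\infty}(M^{A},A)$-modules. This is straightforward in the $A$-manifold setting because $\mathfrak{X}(M^{A})$ is the $C^{\infty}(M^{A},A)$-module of derivations and $\Lambda^{1}(M^{A},A)$ is its dual, so the musical isomorphism is exactly what non-degeneracy means here.
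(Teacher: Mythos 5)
Your proof is correct and matches the paper's intent exactly: the corollary is stated without proof precisely because it follows immediately from the preceding proposition (non-degeneracy of $X\longmapsto i_{X}\Omega^{A}$ as an isomorphism of $C^{\infty}(M^{A},A)$-modules) together with the identity $d^{A}(\Omega^{A})=(d\Omega)^{A}=0$ recalled in the introduction. Your remark about checking that non-degeneracy means bijectivity of the musical map is a reasonable point of care, but it is the same reading the paper adopts, so there is no divergence.
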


When $(M,\Omega)$ is a symplectic manifold, for any $f\in C^{\infty}(M)$, we
denote $X_{f}$ the unique vector field on $M$ \ such that
\[
i_{X_{f}}\Omega=df
\]
and for any $\varphi\in C^{\infty}(M^{A},A)$, we denote $X_{\varphi}$ the
unique vector field on $M^{A}$, considered as a derivation of $C^{\infty}(M)$
into $C^{\infty}(M^{A},A)$, such that
\[
i_{X_{\varphi}}\Omega^{A}=d^{A}(\varphi)\text{.}%
\]
In this case, we know that
\[
X_{f}=ad(f)\text{.}%
\]
We easily verify that the bracket
\begin{align*}
\left\{  \varphi,\psi\right\}  _{\Omega^{A}}  &  =-\Omega^{A}(X_{\varphi
},X_{\psi})\\
&  =\widetilde{X_{\varphi}}(\psi)
\end{align*}
defines a structure of $A$-Poisson manifold on $M^{A}$.

\begin{proposition}
If $(M,\Omega)$ is a symplectic manifold, for any $f\in C^{\infty}(M)$ then
\[
X_{f^{A}}=(X_{f})^{A}\text{.}%
\]

\end{proposition}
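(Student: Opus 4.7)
The plan is to use the characterization $i_{X_{f^{A}}}\Omega^{A}=d^{A}(f^{A})$ together with the injectivity of the map $X\longmapsto i_{X}\Omega^{A}$ established in the preceding proposition, reducing everything to checking a single identity in $\Lambda^{1}(M^{A},A)$.

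First I would compute $i_{(X_{f})^{A}}\Omega^{A}$. Since the earlier proposition asserts $(i_{\theta}\omega)^{A}=i_{\theta^{A}}(\omega^{A})$ for every vector field $\theta$ on $M$ and every differential form $\omega$ on $M$, applying it to $\theta=X_{f}$ and $\omega=\Omega$ gives
\[
i_{(X_{f})^{A}}\Omega^{A}=(i_{X_{f}}\Omega)^{A}=(df)^{A}.
\]
Then I would invoke the compatibility of prolongation with the exterior derivative, namely $d^{A}(\omega^{A})=(d\omega)^{A}$, specialized to $\omega=f\in\Lambda^{0}(M)$, to conclude $(df)^{A}=d^{A}(f^{A})$. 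Combining these,
\[
i_{(X_{f})^{A}}\Omega^{A}=d^{A}(f^{A}).
\]

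On the other hand, by the very definition introduced just before the proposition, $X_{f^{A}}$ is the unique vector field on $M^{A}$ satisfying $i_{X_{f^{A}}}\Omega^{A}=d^{A}(f^{A})$. Since the $C^{\infty}(M^{A},A)$-linear map $\mathfrak{X}(M^{A})\longrightarrow\Lambda^{1}(M^{A},A)$, $X\longmapsto i_{X}\Omega^{A}$, is an isomorphism (in particular injective) by the previous proposition, it follows that $(X_{f})^{A}=X_{f^{A}}$.

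There is really no obstacle here: the argument is a two-line uniqueness computation, and the only point requiring care is to make sure the two named objects lie in the same module, i.e.\ that $(X_{f})^{A}$ is genuinely a vector field on $M^{A}$ in the sense of the paper (a derivation from $C^{\infty}(M)$ to $C^{\infty}(M^{A},A)$), which has already been recorded in the introduction when the prolongation $\theta^{A}$ of a vector field $\theta$ was defined.
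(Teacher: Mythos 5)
Your proof is correct and follows essentially the same route as the paper: both arguments reduce the claim to the equality of the $1$-forms $i_{(X_{f})^{A}}\Omega^{A}$ and $i_{X_{f^{A}}}\Omega^{A}$ and then conclude via the injectivity of $X\longmapsto i_{X}\Omega^{A}$. The only cosmetic difference is that you invoke the previously proved identities $(i_{\theta}\omega)^{A}=i_{\theta^{A}}(\omega^{A})$ and $(df)^{A}=d^{A}(f^{A})$ directly, whereas the paper re-derives the same equality by evaluating both forms on prolonged vector fields $\theta^{A}$.
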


\begin{proof}
The differential $A$-form%
\[
i_{(X_{f})^{A}}\Omega^{A}%
\]
is the unique differential $A$-form of degree $1$ such that%
\begin{align*}
\left[  i_{(X_{f})^{A}}\Omega^{A}\right]  (\theta^{A})  &  =\Omega^{A}%
((X_{f})^{A},\theta^{A})\\
&  =\left[  \Omega(X_{f},\theta)\right]  ^{A}%
\end{align*}
for any $\theta$ $\in$ $\mathfrak{X}(M)$. On the other hand, the differential
$A$-form $i_{X_{f^{A}}}\Omega^{A}$ is of degree $1$ and is such that
\begin{align*}
\left[  i_{X_{f^{A}}}\Omega^{A}\right]  (\theta^{A})  &  =\Omega^{A}(X_{f^{A}%
},\theta^{A})\\
&  =\ \widetilde{\theta^{A}}(f^{A})\\
&  =\ \theta^{A}(f)\\
&  =\left[  \theta(f)\right]  ^{A}\\
&  =\left[  \left(  df\right)  (\theta)\right]  ^{A}\\
&  =\left(  df\right)  ^{A}(\theta^{A})\\
&  =\left[  i_{X_{f}}\Omega\right]  ^{A}(\theta^{A})\\
&  =\ \left[  \Omega(X_{f},\theta)\right]  ^{A}%
\end{align*}
for any $\theta$ $\in$ $\mathfrak{X}(M)$. We conclude that
\[
i_{(X_{f})^{A}}\Omega^{A}=i_{X_{f^{A}}}\Omega^{A}\text{.}%
\]
Thus, we deduce that $X_{f^{A}}=(X_{f})^{A}$.
\end{proof}

We state the following theorem:

\begin{theorem}
If $(M,\Omega)$ is a symplectic manifold, the structure of $A$-Poisson
manifold on $M^{A}$ defined by $\Omega^{A}$ coincide with the prolongation on
$M^{A}$ of the Poisson structure on $M$ defined by the symplectic form
$\Omega$.
\end{theorem}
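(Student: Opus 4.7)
The plan is to reduce the equality of the two brackets $\{,\}_A$ and $\{,\}_{\Omega^A}$ on $C^\infty(M^A,A)$ to the equality of the two families of $A$-linear derivations $\widetilde{\tau_\varphi}$ and $\widetilde{X_\varphi}$ that define them. By construction, $\{\varphi,\psi\}_A=\widetilde{\tau_\varphi}(\psi)$, while the preceding subsection records $\{\varphi,\psi\}_{\Omega^A}=\widetilde{X_\varphi}(\psi)$. Hence it is enough to prove that, for every $\varphi\in C^\infty(M^A,A)$, the two $A$-linear derivations $\widetilde{\tau_\varphi}$ and $\widetilde{X_\varphi}$ of $C^\infty(M^A,A)$ coincide.

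Next, I would invoke the uniqueness principle established in the introduction: an $A$-linear derivation of $C^\infty(M^A,A)$ is determined by its restriction to the set $\{f^A:f\in C^\infty(M)\}$. Therefore it suffices to check
\[
\widetilde{\tau_\varphi}(f^A)=\widetilde{X_\varphi}(f^A)\qquad\text{for every }f\in C^\infty(M).
\]

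For the left-hand side, the definition of $\tau_\varphi$ gives directly
\[
\widetilde{\tau_\varphi}(f^A)=\tau_\varphi(f)=-\widetilde{[ad(f)]^A}(\varphi).
\]
For the right-hand side, I would use the skew-symmetry of $\{,\}_{\Omega^A}$, which is immediate from the skew-symmetry of $\Omega^A$ via $\Omega^A(X_\varphi,X_{f^A})=-\Omega^A(X_{f^A},X_\varphi)$, to write
\[
\widetilde{X_\varphi}(f^A)=\{\varphi,f^A\}_{\Omega^A}=-\{f^A,\varphi\}_{\Omega^A}=-\widetilde{X_{f^A}}(\varphi).
\]
Then the proposition $X_{f^A}=(X_f)^A$ proved just above, combined with the standard symplectic identity $X_f=ad(f)$, yields $\widetilde{X_{f^A}}=\widetilde{[ad(f)]^A}$, and therefore
\[
\widetilde{X_\varphi}(f^A)=-\widetilde{[ad(f)]^A}(\varphi).
\]
The two expressions agree, so $\widetilde{X_\varphi}=\widetilde{\tau_\varphi}$, and consequently $\{\varphi,\psi\}_{\Omega^A}=\{\varphi,\psi\}_A$ for all $\varphi,\psi\in C^\infty(M^A,A)$.

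There is no genuine obstacle in this argument: it is essentially a bookkeeping exercise that assembles three previously established facts, namely (i) the uniqueness of the $A$-linear extension $\widetilde{(\cdot)}$, (ii) the compatibility $X_{f^A}=(X_f)^A$ between symplectic Hamiltonians and prolongation, and (iii) the identification $X_f=ad(f)$ on a symplectic manifold. The only point requiring momentary care is the skew-symmetry of $\{,\}_{\Omega^A}$, but this follows at once from that of $\Omega^A$ together with the definition $\{\varphi,\psi\}_{\Omega^A}=-\Omega^A(X_\varphi,X_\psi)$.
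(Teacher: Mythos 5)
Your proposal is correct and follows essentially the same route as the paper: both arguments reduce the equality of the two brackets to showing $\widetilde{\tau_\varphi}=\widetilde{X_\varphi}$, check this on the generators $f^A$ via the uniqueness of the $A$-linear extension, and combine $X_{f^A}=(X_f)^A$, $X_f=ad(f)$, and the skew-symmetry of $\Omega^A$ to identify $\widetilde{X_\varphi}(f^A)$ with $-\widetilde{[ad(f)]^A}(\varphi)=\widetilde{\tau_\varphi}(f^A)$. The only cosmetic difference is that you phrase the middle step through the skew-symmetry of $\{,\}_{\Omega^A}$ where the paper unwinds $d^A(f^A)=i_{X_{f^A}}\Omega^A$ explicitly; the content is identical.
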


\begin{proof}
We will show that%
\[
\widetilde{\tau_{\varphi}}=\widetilde{X_{\varphi}}%
\]
for any $\varphi\in C^{\infty}(M^{A},A)$. For any $f\in C^{\infty}(M)$, we
have%
\begin{align*}
\widetilde{X_{\varphi}}(f^{A})  &  =\ \left[  d^{A}\left(  f^{A}\right)
\right]  \left(  X_{\varphi}\right) \\
&  =\left[  i_{X_{f^{A}}}\Omega^{A}\right]  \left(  X_{\varphi}\right) \\
\  &  =-\Omega^{A}\left(  X_{\varphi},X_{f^{A}}\right) \\
&  =-\Omega^{A}\left[  X_{\varphi},(X_{f})^{A}\right] \\
&  =-\left[  i_{X_{\varphi}}\Omega^{A}\right]  \left(  (X_{f})^{A}\right) \\
&  =-(d^{A}\varphi)\left(  (X_{f})^{A}\right) \\
&  =-\widetilde{(X_{f})^{A}}(\varphi)\\
&  =-\widetilde{\left[  ad\left(  f\right)  \right]  ^{A}}(\varphi)\\
&  =\widetilde{\tau_{\varphi}}(f^{A})\text{.}%
\end{align*}
We deduce, \cite{bos}, that
\[
\widetilde{\tau_{\varphi}}=\widetilde{X_{\varphi}}\text{.}%
\]
Therefore, for any $\varphi,\psi\in C^{\infty}(M^{A},A)$, we have
\[
\left\{  \varphi,\psi\right\}  _{\Omega^{A}}=\left\{  \varphi,\psi\right\}
_{A}\text{.}%
\]
That ends the proof.
\end{proof}

\end{document}